\def\+{\!+\!}
\def\-{\!-\!}
\def\m{\!-\!}
\def\L{\mathcal{L}}
\def\bproc{{\bf procedure\ }}
\def\com#1{{\bf*}\hspace{6pt}{\sl #1}}
\def\la{\leftarrow}
\def\bfor{{\bf for\ }}
\def\bto{{\bf to\ }}
\def\bdo{{\bf do\ }}
\def\qq{\qquad}
\def\bif{{\bf if\ }}
\def\bthen{{\bf then\ }}
\def\belse{{\bf else\ }}
\def\belsif{{\bf elsif\ }}
\def\CPM{Annual Symp.\ Combinatorial Pattern Matching}
\def\JDA{J.\ Discrete Algorithms\ }
\def\TCS{Theoret.\ Comput.\ Sci.\ }
\theoremstyle{plain}
\newtheorem{theorem}{Theorem}
\newtheorem{lemma}[theorem]{Lemma}
\newtheorem{corollary}[theorem]{Corollary}
\newtheorem{remark}[theorem]{Remark}
\theoremstyle{definition}
\theoremstyle{remark}
\begin{document}

\title{More Properties of the Fibonacci Word on an Infinite Alphabet}

\author{Amy Glen}

\address{Amy Glen \newline
\indent School of Engineering \& Information Technology \newline
\indent Murdoch University \newline
\indent  90 South Street \newline
\indent Murdoch, WA 6150 AUSTRALIA}%
\email{\href{mailto:A.Glen@murdoch.edu.au}{A.Glen@murdoch.edu.au}}

\author{Jamie Simpson}

\address{Jamie Simpson \newline
\indent Department of Mathematics and Statistics \newline
\indent Curtin University \newline
\indent Bentley, WA 6102 AUSTRALIA}%
\email{\href{mailto:Jamie.Simpson@curtin.edu.au}{Jamie.Simpson@curtin.edu.au}}

\author{W. F. Smyth}
\address{W. F. Smyth \newline
\indent Department of Computing and Software \newline
\indent McMaster University \newline
\indent Hamilton, Ontario L8S4K1 CANADA}%
\email{\href{mailto:smyth@mcmaster.ca}{smyth@mcmaster.ca}}

\date{May 25, 2018}

\begin{abstract} %
Recently the Fibonacci word $W$ on an infinite alphabet was introduced by
[Zhang {\it et al.}, {\it Electronic J.\ Combinatorics} 24--2 (2017) \#P2.52]
as a fixed point of the morphism
$\phi: (2i) \mapsto (2i)(2i\+ 1),\ (2i\+ 1) \mapsto (2i\+ 2)$
over all $i \in \mathbb{N}$.
In this paper we investigate the occurrence of squares, palindromes, and Lyndon factors
in this infinite word.
\end{abstract}

\subjclass[2010]{68R15}

\keywords{Fibonacci word; Lyndon word;  palindrome; square}

\maketitle

\section{Introduction}
\label{sect-1}

A word of $n$ letters is $x=x[1 \dots n]$, with $x[i]$ being
the $i$-th letter and $x[i\dots j]$ the \emph{factor} consisting of letters from
position $i$ to position $j$.  If $i=1$ then the factor is a
\emph{prefix} and if $j=n$ it is a \emph{suffix}. The letters in $x$
come from some \emph{alphabet} $A$.  The \emph{length} of $x$, written $|x|$,
is the number of occurrences of letters in $x$ and the number of
occurrences of the letter $a$ in $x$ is denoted by $|x|_a$. Two or more
adjacent identical factors form a \emph{power}. A word $x$ or factor $x$ is
\emph{periodic} with period $p$ if $x[i]=x[i+p]$ for all $i$ such
that $x[i]$ and $x[i+p]$ are in $x$. A periodic word with least
period $p$ and length $n$ is said to have \emph{exponent} $n/p$. The
word $ababa$ has exponent $5/2$ and can be written as $(ab)^{5/2}$.
Thus powers have integer exponent
at least 2. A factor with exponent 2 is a \emph{square}. Two words
$x$ and $y$ are \emph{conjugate} if there exist words $u$ and $v$
such that $x=uv$ and $y=vu$.   If $x=x[1\dots n]$ then the
\emph{reverse} of $x$, written $R(x)$, is $x[n]x[n-1]\dots x[1].$ A
word that equals its own reverse is a \emph{palindrome}. If $x=uvu$
we say that $x$ has \emph{border} $u$, and we see that $x$ has
period $|x|-|u|$. If the alphabet is ordered then words are ordered lexicographically.  A word which is lexicographically less than each of it conjugates is a \emph{Lyndon word}. Lyndon words are necessarily {\it primitive} (i.e., not a power of a shorter word).

Recall that the \textit{Fibonacci word} $F$ over the binary alphabet $\{0,1\}$ is the fixed point of the morphism $\psi(0)=01$, $\psi(1)=0$, and begins $$F=010010100100101001010\cdots$$  Recently Zhang, Wen, and Wu \cite{ZWW17} introduced an interesting modification of this word.  As an alphabet they used the non-negative integers, and as a morphism they used $\phi(2i)=2i \cdot 2i+1$ and $\phi(2i+1)=2i+2$. Here and elsewhere we use "$\cdot$" to indicate concatenation when it may not be clear from the context.  From this we see that $\phi(0)=01$, $\phi^2(0)=012$, and so on, with a fixed point of $\phi$ being the infinite word beginning $$W=012232342344523445456\cdots$$ We will call this the \textit{ZWW word} after the authors of \cite{ZWW17} and give it the symbol $W$ as the majority of the names of the authors of \cite{ZWW17} begin with $W$. Some of the properties of the Fibonacci word have parallels with those of the ZWW word.  For example, if we reduce the elements of the ZWW word modulo 2 we obtain the Fibonacci word. The well-known \textit{finite Fibonacci words} are $F_0$, $F_1$, etc. where $F_i=\psi^i(0)$.  The first few of these words are shown in Table~\ref{introtable1} below.  Note that these finite words have the property that
\begin{equation} \label{intro_e1}
F_{i+2}=F_{i+1}\cdot F_i \quad \mbox{for $i \geq 0$}.
\end{equation}
Analogously, we define the \textit{finite ZWW words} as $W_i=\phi^i(0)$.  The first few of these are also shown in Table~\ref{introtable1} below.

\begin{table}[htb!] \label{introtable1}
\begin{center}
\caption{Finite Fibonacci and ZWW words}
    \begin{tabular}{ll}
    $  F_0=0 $& $W_0=0 $\\
    $  F_1=01 $& $ W_1=01 $\\
    $  F_2=010$ & $W_2=012 $\\
    $  F_3=01001$ & $ W_3=01223 $\\
    $  F_4=01001010$ & $W_4=01223234 $\\
    $  F_5=0100101001001$ & $W_5=0122323423445 $\\
    \end{tabular}
  \end{center}
\end{table}
It is easily shown that, for $i \ge 0$,
\begin{equation} \label{introe0}
|W_i|=|F_i|=f_{i+2}
\end{equation} where $f_i$ is the $i$-th Fibonacci number defined by: $f_1=f_2=1$, $f_{i+2}=f_i+f_{i+1}$ for $i \ge 1$.

For a finite word $w$,  $n \oplus w$ denotes the word formed from $w$ by adding $n$ to each of its members.  Analogously to \eqref{intro_e1}, we have the following result.
\begin{lemma}\label{lemm-l2} For $i\ge 2$,
\begin{equation} \label{e1}
W_{i+1}=W_i \cdot 2 \oplus W_{i-1}.
\end{equation}
\begin{proof} We note that for any number $i$
$$\phi(2 \oplus i)=2 \oplus \phi(i).$$  We use induction on $i$.  It is easily checked that the
statement holds for $i=2$. Suppose it
holds for all $i$ with $2 \le i \le j$.  Then
\begin{eqnarray*}
W_{j+1}&=&\phi(W_j)\\
&=&\phi(W_{j-1}\cdot 2\oplus W_{j-2}) \text{ by the induction hypothesis}\\
&=& W_j \cdot \phi(2\oplus W_{j-2})\\
&=& W_j \cdot 2\oplus \phi(W_{j-2})\\
&=& W_j \cdot 2\oplus W_{j-1}
\end{eqnarray*} as required.
\end{proof}
\end{lemma}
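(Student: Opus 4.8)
The plan is to proceed by induction on $i$, with the engine of the argument being the observation already recorded in the statement: the morphism $\phi$ commutes with the shift map $w \mapsto 2 \oplus w$, that is, $\phi(2 \oplus w) = 2 \oplus \phi(w)$ for every finite word $w$ over the non-negative integers. First I would verify this commutation relation at the level of a single letter $a$, splitting into two cases according to the parity of $a$. If $a = 2i$ is even, then $2 \oplus a = 2(i\+1)$ is again even, so $\phi(2 \oplus a) = (2i\+2)(2i\+3)$, whereas $2 \oplus \phi(2i) = 2 \oplus (2i)(2i\+1) = (2i\+2)(2i\+3)$, and the two agree. The odd case $a = 2i\+1$ is handled symmetrically, with both sides reducing to the single letter $2i\+4$. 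Since $\phi$ is a morphism and the shift acts letterwise, both maps distribute over concatenation, so the single-letter identity extends immediately to arbitrary words $w$.

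With the commutation relation in hand, the base case $i = 2$ is a direct check: $W_2 \cdot (2 \oplus W_1) = 012 \cdot 23 = 01223 = W_3$. For the inductive step I would assume the identity holds at all smaller admissible indices and derive $W_{j+1} = W_j \cdot (2 \oplus W_{j-1})$. Writing $W_{j+1} = \phi(W_j)$ and invoking the induction hypothesis in the form $W_j = W_{j-1} \cdot (2 \oplus W_{j-2})$, the morphism property of $\phi$ gives $W_{j+1} = \phi(W_{j-1}) \cdot \phi(2 \oplus W_{j-2})$. The first factor equals $W_j$ by the definition $W_j = \phi(W_{j-1})$, while the second factor equals $2 \oplus \phi(W_{j-2}) = 2 \oplus W_{j-1}$ by the commutation relation; together these yield exactly the required equation.

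I do not anticipate a genuine obstacle: essentially all of the content is carried by the commutation relation $\phi(2 \oplus w) = 2 \oplus \phi(w)$, and once that is secured the induction is purely formal, being a routine unwinding of the recursion $W_{i+1} = \phi(W_i)$. The only point demanding a little care is the bookkeeping with index ranges. The inductive step invokes the hypothesis $W_j = W_{j-1} \cdot (2 \oplus W_{j-2})$, which is the identity at index $j\-1$ and hence requires $j \ge 3$; the explicitly verified base case $i = 2$ then fills the remaining gap, so that the claim is established for all $i \ge 2$.
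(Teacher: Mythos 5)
Your proposal is correct and follows essentially the same route as the paper: induction on $i$ driven by the commutation relation $\phi(2 \oplus w) = 2 \oplus \phi(w)$, with the identical chain of equalities in the inductive step. The only difference is that you explicitly verify the commutation relation letter-by-letter (both parity cases check out), where the paper simply asserts it.
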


\begin{lemma} \label{l1} For all $i \ge 0$,  the first letter of $W_i$ is $0$ and the last letter of $W_i$ is $i$. Moreover, $i$ occurs only once in $W_i$ and all the other letters in $W_i$ are less than $i$.
\end{lemma}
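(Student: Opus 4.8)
The plan is to prove Lemma~\ref{l1} by induction on $i$, using the recurrence from Lemma~\ref{lemm-l2} together with the base cases that can be read directly from Table~\ref{introtable1}. First I would verify the claim for the small values $i=0,1,2$ by inspection: $W_0=0$, $W_1=01$, and $W_2=012$ all manifestly begin with $0$, end with their index, contain that index exactly once, and have all other letters strictly smaller. These cases also supply the initial data needed to start the recurrence, which is only valid for $i\ge 2$.

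For the inductive step, suppose the four assertions hold for all indices up to some $j\ge 2$; I would establish them for $W_{j+1}$ using the decomposition
\begin{equation*}
W_{j+1}=W_j\cdot(2\oplus W_{j-1}).
\end{equation*}
The first-letter claim is immediate: $W_{j+1}$ begins with $W_j$, which by hypothesis begins with $0$. For the last letter, the suffix of $W_{j+1}$ is the suffix of $2\oplus W_{j-1}$; since $W_{j-1}$ ends in $j-1$ by hypothesis, adding $2$ gives final letter $(j-1)+2=j+1$, as required. The heart of the argument is the "occurs only once" and "all others smaller" claims, and here I would analyze the two blocks separately. In $W_j$ every letter is $\le j<j+1$ by the induction hypothesis, so the block $W_j$ contains no letter as large as $j+1$ and in particular no occurrence of $j+1$. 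In the shifted block $2\oplus W_{j-1}$, the letters are exactly $\{x+2 : x \text{ a letter of } W_{j-1}\}$; since every letter of $W_{j-1}$ is $\le j-1$ with the value $j-1$ attained exactly once, the letters of $2\oplus W_{j-1}$ are all $\le j+1$ with the value $j+1$ attained exactly once. Combining the two blocks, the letter $j+1$ occurs exactly once in all of $W_{j+1}$ and every other letter is strictly less than $j+1$.

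The step I expect to require the most care is confirming that the shift operator $\oplus$ interacts with the uniqueness count exactly as claimed — that is, that adding a constant to every letter preserves both the maximum and its multiplicity. This is elementary but must be stated cleanly, since the whole uniqueness argument hinges on the fact that the two concatenated blocks $W_j$ and $2\oplus W_{j-1}$ cannot both contribute an occurrence of $j+1$: the first block is capped below $j+1$ and the second contributes the unique top value. Once this bookkeeping is made explicit, the four assertions for $W_{j+1}$ follow directly and the induction closes.
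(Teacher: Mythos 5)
Your proposal is correct and follows exactly the route the paper intends: the paper's own proof is the one-line remark ``this follows easily by induction using the previous lemma,'' and your argument is precisely that induction on the decomposition $W_{j+1}=W_j\cdot(2\oplus W_{j-1})$, with the base cases $i=0,1,2$ checked by inspection and the shift-preserves-maximum-and-multiplicity bookkeeping made explicit. Nothing is missing.
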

\begin{proof}
 This follows easily by induction using the previous lemma.
\end{proof}

We now present two ways of factorising $W_k$.  These will be used later.
\begin{lemma} \label{intro l2} For $k \ge 2$,
\begin{eqnarray*}
W_k &= &01\cdot \prod_{i=0}^{k-2}2\oplus W_i\\
&=& 01\cdot 2 \oplus \prod_{i=0}^{k-2}W_i
\end{eqnarray*} where $\prod$ indicates concatenation.
\begin{proof}The second and third parts of the display are clearly equal.  We will use induction on $k$ to show the first and second are.  The theorem clearly holds when $k=2$.  We assume it hold up to $k-1$ so that
$$W_{k-1}=01\cdot \prod_{i=0}^{k-3}2\oplus W_i.\\$$ Now using Lemma \ref{lemm-l2}, we have
\begin{eqnarray*}
W_k &=& W_{k-1} \cdot 2 \oplus W_{k-2}\\
&=&  01\cdot \Pi_{i=0}^{k-3}2 \oplus W_i \cdot 2 \oplus W_{k-2}\\
&=& 01\cdot \Pi_{i=0}^{k-2}2\oplus W_i
\end{eqnarray*}
and the result follows.
\end{proof}
\end{lemma}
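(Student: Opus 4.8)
The plan is to prove the two displayed factorisations in two stages: first I would establish that the two right-hand sides coincide, and then prove the single identity $W_k = 01\cdot\prod_{i=0}^{k-2}2\oplus W_i$ by induction on $k$, with Lemma~\ref{lemm-l2} as the engine of the inductive step.

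The coincidence of the two right-hand expressions is the routine part. The operation $2\oplus(\cdot)$ merely adds $2$ to every letter of a word, and since this affects the letters independently of their position, it commutes with concatenation; hence
\[
2\oplus\prod_{i=0}^{k-2}W_i=\prod_{i=0}^{k-2}\bigl(2\oplus W_i\bigr).
\]
Prefixing both sides by $01$ yields the claimed equality of the second and third parts of the display, so it remains only to verify the first equality.

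For the induction, the base case $k=2$ is a direct check: the product on the right collapses to the single factor $2\oplus W_0=2\oplus 0=2$, so the right-hand side is $01\cdot 2=012=W_2$. For the inductive step I would assume the identity for $k-1$, namely $W_{k-1}=01\cdot\prod_{i=0}^{k-3}2\oplus W_i$, apply Lemma~\ref{lemm-l2} in the form $W_k=W_{k-1}\cdot 2\oplus W_{k-2}$, substitute the inductive hypothesis for $W_{k-1}$, and then observe that appending the single factor $2\oplus W_{k-2}$ simply extends the product's upper index from $k-3$ to $k-2$.

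There is no genuine obstacle here; the argument is a short induction once the distributivity of $2\oplus(\cdot)$ over concatenation is noted. The only points meriting care are that Lemma~\ref{lemm-l2} is available only for index at least $2$ (so the recurrence $W_k=W_{k-1}\cdot 2\oplus W_{k-2}$ applies for $k\ge 3$, which is precisely why the base case is taken at $k=2$), and the bookkeeping of the upper limit of the product when the new factor is absorbed.
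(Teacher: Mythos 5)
Your proposal is correct and follows essentially the same route as the paper: an induction on $k$ with base case $k=2$, using Lemma~\ref{lemm-l2} to append the factor $2\oplus W_{k-2}$ in the inductive step, and noting that $2\oplus(\cdot)$ distributes over concatenation to identify the two right-hand sides. The only difference is that you spell out the distributivity justification that the paper dismisses as ``clearly equal,'' which is a harmless (indeed welcome) elaboration.
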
 As an example of Lemma \ref{intro l2} consider,
\begin{eqnarray*}
W_5 & = & 01  \cdot 2 \oplus W_0 \cdot2 \oplus W_1 \cdot2 \oplus W_2 \cdot2 \oplus W_3\\
& =  & 01 \cdot 2 \oplus 0 \cdot2 \oplus 01 \cdot2 \oplus 012 \cdot2 \oplus 01223\\
& = &01\cdot 2\cdot 23 \cdot 234 \cdot 23445.
\end{eqnarray*}  The second factorisation also uses iteration of Lemma \ref{lemm-l2}.
\begin{lemma}
\label{lemm-W-2}
For $k \ge 1$,
\begin{eqnarray}
\label{odd}
W_{2k-1} &=& \Big[\prod\limits_{j=k}^1 2(k - j) \oplus W_{2j-2}\Big] \cdot (2k - 1), \\
\label{even}
W_{2k} &=& \Big[\prod\limits_{j=k}^1 2(k - j) \oplus W_{2j-1}\Big] \cdot (2k),
\end{eqnarray}
where $\prod\limits_{j=k}^1$ indicates concatenation
in the order $j = k,k-1,\ldots,1$.
\end{lemma}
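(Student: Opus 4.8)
The plan is to prove \eqref{odd} and \eqref{even} by two structurally identical (and essentially independent) inductions on $k$, in each case reducing the one-step recurrence to a single application of Lemma~\ref{lemm-l2}. Abbreviate the two concatenation products by $P_{2k-1} = \prod_{j=k}^1 2(k-j)\oplus W_{2j-2}$ and $P_{2k} = \prod_{j=k}^1 2(k-j)\oplus W_{2j-1}$, so that the two claims read $W_{2k-1}=P_{2k-1}\cdot(2k-1)$ and $W_{2k}=P_{2k}\cdot(2k)$. The base case $k=1$ gives $P_1 = W_0 = 0$ and $P_2 = W_1 = 01$, whence $P_1\cdot 1 = 01 = W_1$ and $P_2\cdot 2 = 012 = W_2$, each of which checks directly against Table~\ref{introtable1}.

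The crux is a purely combinatorial \emph{peeling identity} for the products, which isolates the leading full word and exhibits the remaining tail as a shifted copy of the product two steps back. For the odd family I would establish
\[
P_{2k+1} = W_{2k}\cdot (2\oplus P_{2k-1}),
\]
and for the even family the analogous $P_{2k+2} = W_{2k+1}\cdot (2\oplus P_{2k})$. Each follows by splitting off the $j=k+1$ term of the product (which is $0\oplus W_{2k}=W_{2k}$, respectively $W_{2k+1}$) and noting that in every surviving factor the shift $2((k+1)-j)$ equals $2+2(k-j)$; since $n\oplus(\cdot)$ distributes over concatenation, the leftover tail is exactly $2\oplus P_{2k-1}$ (respectively $2\oplus P_{2k}$).

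Granting the peeling identity, the inductive step is immediate. Assuming \eqref{odd} for $k$, that is $W_{2k-1}=P_{2k-1}\cdot(2k-1)$, I would compute
\begin{align*}
W_{2k+1} &= W_{2k}\cdot 2\oplus W_{2k-1}
= W_{2k}\cdot 2\oplus\bigl(P_{2k-1}\cdot(2k-1)\bigr)\\
&= W_{2k}\cdot(2\oplus P_{2k-1})\cdot(2k+1)
= P_{2k+1}\cdot(2k+1),
\end{align*}
using Lemma~\ref{lemm-l2} in the first equality, the induction hypothesis in the second, the fact that $2\oplus$ promotes the trailing letter $2k-1$ to $2k+1$ in the third, and the peeling identity in the last; this is precisely \eqref{odd} for $k+1$. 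The even family is handled verbatim, starting from $W_{2k+2}=W_{2k+1}\cdot 2\oplus W_{2k}$ and using \eqref{even} for $k$.

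I expect the only real obstacle to be the index bookkeeping: confirming that distributing $2\oplus$ through the concatenation product shifts every block's base value by exactly $2$ while correctly promoting the single appended trailing letter $(2j-2)$ or $(2j-1)$, and verifying the reindexing $2((k+1)-j)=2+2(k-j)$ that makes the tail of $P_{2k+1}$ coincide with $2\oplus P_{2k-1}$. Once the peeling identity is pinned down, no further structural idea is required.
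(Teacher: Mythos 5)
Your proof is correct and follows essentially the same route as the paper: an induction on $k$ whose step applies Lemma~\ref{lemm-l2}, distributes $2\oplus$ over the product (promoting the trailing letter $2k-1$ to $2k+1$), and reindexes via $2(k+1-j)=2+2(k-j)$ to absorb $W_{2k}$ as the new $j=k+1$ factor. The only cosmetic difference is that you isolate this reindexing as an explicit ``peeling identity,'' whereas the paper performs it inline within the chain of equalities.
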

\begin{proof}
We give the proof for $2k\- 1$; the proof for $2k$ is similar.
Since $W_1 = 0 \oplus W_0 \cdot 1 = 01$,
the result holds for $k = 1\ (2k \- 1 = 1)$.
Suppose it is true for some $2k\- 1,\ k\ge 1$.
Then
\begin{eqnarray*}
W_{2k+1} & = & W_{2k}\cdot 2\oplus W_{2k-1}, \mbox{\ by Lemma~\ref{lemm-l2}} \\
& = & W_{2k} \cdot 2\oplus \Big[\prod\limits_{j=k}^1 2(k\- j)\oplus W_{2j-2} \cdot (2k\- 1)\Big] \quad \mbox{\ by the induction hypothesis} \\
& = & W_{2k} \cdot  \Big[\prod\limits_{j=k}^1 2(k\+ 1\- j)\oplus W_{2j-2}\Big] \cdot (2k\+ 1) \\
& = & \Big[\prod\limits_{j=k+1}^1 2(k\+ 1\- j)\oplus W_{2j-2}\Big] \cdot (2k\+ 1),
\end{eqnarray*}
and so the result holds for all odd indices $2k\+ 1$.
\end{proof}
As an example of Lemma~\ref{lemm-W-2}, consider
\begin{eqnarray*}
W_5 &=& W_4 \cdot 2\oplus W_2 \cdot 4\oplus W_0 \cdot 5 \\
&=& 01223234 \cdot 2\oplus 012\cdot 4\oplus 0 \cdot 5 \\
&=& 01223234 \cdot 234 \cdot 4 \cdot 5.
\end{eqnarray*}

In \cite{ZWW17} the authors investigated the growth of $W$; in particular, they showed that the $n$-th letter in $W$ is less than $c \log n$ for some constant $c$ and that the sum of the letters of $W_k$ is
$$\frac{k(\gamma^{k+1}+\psi^{k+1})(\psi^2+\gamma)}{(\gamma-\psi)^2}+\frac{\psi^k-\gamma^k}{(\gamma-\psi)^3}$$ where $\gamma=\frac{1+\sqrt{5}}{2}$ and $\psi=\frac{1-\sqrt{5}}{2}$. Much of their paper was devoted to presenting various factorisations of $W$ using singular words.  They also showed that the only palindromes in $W$ are those of the form $2i \oplus 22$, $2i \oplus 232$, and $2i \oplus 323$ where $i$ is any non-negative integer.  In the present paper we are mainly concerned with the finite words $W_k$.  In Section~\ref{S:letters} we count the numbers of occurrences of letters in $W_k$, in Section~\ref{S:palindromes} we count palindromes, and in Sections~\ref{S:squares} and \ref{S:total-squares} we count squares.  In the final two sections we count the Lyndon factors in $W_k$ and describe the Lyndon array of $W_k$.
\section{Number of occurrences of a letter in $W_k$}
\label{S:letters}

We set $N(i,n)$ to be the number of occurrences of the letter $n$ in $W_i$. It is clear that
\begin{eqnarray}
\label{ee2} &&N(i,0)=1 \text{ for all } i \geq 0, \\
\label{ee3} &&N(i,1)=1 \text{ for all } i \ge 1,\\
\label{ee4} &&N(0,n)=0 \text{ for all } n \ge 1,\\
\label{ee5} &&N(1,n)=0 \text{ for all } n \ge 2.
\end{eqnarray}
\begin{theorem}
\begin{equation} \label{ee6} N(i,n)=\binom{i-n+\lfloor \frac{n}{2}\rfloor}{\lfloor \frac{n}{2}\rfloor},
\end{equation} where $\binom{x}{y}=0$ if $y>x$, $x<0$ or $y<0$.
\begin{proof}
 Note that each of \eqref{ee2}, \eqref{ee3}, \eqref{ee4} and \eqref{ee5} is satisfied by \eqref{ee6}. By Lemma~\ref{lemm-l2}, we have, for $i \ge 1$,
\begin{eqnarray*}
N(i+1,n)&=&|W_i \cdot 2 \oplus W_{i-1}|_n\\
&=& |W_i|_n + |2 \oplus W_{i-1}|_n\\
&=& |W_i|_n + | W_{i-1}|_{n-2}\\
&=& N(i,n)+N(i-1,n-2).
\end{eqnarray*}

Also, by Lemma \ref{l1}, $N(i,n)=0$ whenever $n>i$. Now we prove that \eqref{ee6} holds when $i \ge n$. We use induction on $i$.  This is true when $i=0$ and $i=1$ by (\ref{ee4}) and (\ref{ee5}). Suppose it holds for $i=1,\dots,k$. Then
\begin{eqnarray*}
N(k+1,n) &=& \binom{k-n+\lfloor \frac{n}{2} \rfloor}{\lfloor \frac{n}{2} \rfloor}+\binom{k-n+\lfloor \frac{n}{2} \rfloor}{\lfloor \frac{n}{2} \rfloor - 1}\\
&=& \binom{k-n+\lfloor \frac{n}{2} \rfloor+1}{\lfloor \frac{n}{2} \rfloor}
\end{eqnarray*} as required.
\end{proof}
\end{theorem}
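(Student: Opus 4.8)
The plan is to turn the self-similar decomposition of Lemma~\ref{lemm-l2} into a two-variable recurrence for $N(i,n)$ and then verify the closed form \eqref{ee6} by induction on $i$, with Pascal's identity doing the arithmetic. First I would take the count of the letter $n$ on both sides of \eqref{e1}. Because $2\oplus W_{i-1}$ is $W_{i-1}$ with every letter raised by $2$, the number of $n$'s it contains equals the number of $(n-2)$'s in $W_{i-1}$, giving
$$N(i+1,n)=N(i,n)+N(i-1,n-2).$$
I would then confirm that the proposed formula already reproduces the boundary values \eqref{ee2}--\eqref{ee5}; in particular, for $n\in\{0,1\}$ the shifted term $N(i-1,n-2)$ has a negative lower argument and so vanishes, leaving $N(i,0)=N(i,1)=1$ in agreement with \eqref{ee6}.

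Next I would use Lemma~\ref{l1} to dispose of the range $n>i$: there the letter $n$ does not occur in $W_i$, so $N(i,n)=0$, and one checks the formula also gives $0$ since $i-n+\lfloor n/2\rfloor<\lfloor n/2\rfloor$ when $i<n$. With the degenerate cases settled, it remains to establish \eqref{ee6} for $i\ge n$. Here I would run a strong induction on $i$: assuming the formula for all indices up to $k$ (and all $n$, with base cases $i=0,1$ from \eqref{ee4}--\eqref{ee5}), I substitute the inductive hypothesis into the recurrence to express $N(k+1,n)$ as a sum of two binomial coefficients, and the task reduces to matching their arguments.

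The only genuine bookkeeping is the floor function, and the key observation is that the recurrence moves $n\mapsto n-2$, which preserves the parity of $n$. Consequently $\lfloor (n-2)/2\rfloor=\lfloor n/2\rfloor-1$, and substituting this into the formula for $N(k-1,n-2)$ collapses its upper argument to exactly $k-n+\lfloor n/2\rfloor$, the same upper argument carried by $N(k,n)$. Pascal's rule then gives
$$\binom{k-n+\lfloor n/2\rfloor}{\lfloor n/2\rfloor}+\binom{k-n+\lfloor n/2\rfloor}{\lfloor n/2\rfloor-1}=\binom{k+1-n+\lfloor n/2\rfloor}{\lfloor n/2\rfloor},$$
which is precisely \eqref{ee6} at $i=k+1$, completing the induction.

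I expect the main (if modest) obstacle to be exactly this parity/floor reconciliation: verifying that shifting $n$ by $2$ rather than $1$ is what makes the two floors differ by a clean $1$, so that the two binomial coefficients share an upper argument and Pascal's identity applies uniformly. Were the shift odd, the floors would not align and one would be forced into separate even/odd cases; the even shift is what lets the whole argument proceed without splitting on the parity of $n$.
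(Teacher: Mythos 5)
Your proposal is correct and follows essentially the same route as the paper: derive the recurrence $N(i+1,n)=N(i,n)+N(i-1,n-2)$ from Lemma~\ref{lemm-l2}, check the boundary cases, and close the induction with Pascal's identity. Your explicit verification that $\lfloor (n-2)/2\rfloor=\lfloor n/2\rfloor-1$ aligns the two binomial coefficients is a detail the paper leaves implicit, but it is the same argument.
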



\section{Palindromes} \label{S:palindromes}

Zhang \textit{et al.} \cite[Prop.~27]{ZWW17} showed that there are no non-trivial palindromes in the ZWW word other than $2i \oplus 22$, $2i \oplus 232$, and $2i \oplus 323$ for $i\ge 0$.  There are also trivial palindromes consisting of a single letter. In the next two theorems we count the total number of palindromes in $W_i$ and the number of distinct palindromes in $W_i$. By Lemma~\ref{lemm-l2}, we have $$W_{i}=W_{i-1} \cdot 2 \oplus W_{i-2} \quad \mbox{for $i \ge 0$}.$$ Palindromes in $W_i$ therefore come in three types: those contained in $W_{i-1}$, those contained in $2 \oplus W_{i-2}$ and those which straddle the boundary between  $W_{i-1}$ and  $2 \oplus W_{i-2}$.  If a palindrome is of the last type for some $i$ we say it is  \emph{straddling}.

\begin{lemma} \label{L:pal} The only straddling palindromes in the ZWW word are  $22$, $232$, and $323$ which occur only in $W_3$ and $W_4$.
\end{lemma}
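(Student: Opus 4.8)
The plan is to combine the known classification of all palindromes in $W$ (from \cite{ZWW17}, namely $2i \oplus 22$, $2i \oplus 232$, and $2i \oplus 323$ for $i \ge 0$) with the boundary structure coming from Lemma~\ref{lemm-l2}. A straddling palindrome at stage $i$ is one that genuinely crosses the junction between $W_{i-1}$ and $2 \oplus W_{i-2}$ in the factorisation $W_i = W_{i-1} \cdot 2 \oplus W_{i-2}$. The first observation I would record is that by Lemma~\ref{l1}, the last letter of $W_{i-1}$ is $i-1$, and it occurs there only once (all other letters of $W_{i-1}$ being strictly smaller), while $2 \oplus W_{i-2}$ begins with the letters $23$ (since $W_{i-2}$ begins $01$) and its letters are all at least $2$. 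So the boundary between the two blocks has the shape $\cdots (i-1) \mid 2\, 3 \cdots$.

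First I would argue that a straddling palindrome must be centred at or very near the junction, and then exploit the fact that the single large letter $i-1$ appears on the left of the junction while the letters immediately to the right are the small values $2,3,\dots$. For a factor straddling the boundary to be a palindrome, the multiset of letters on each side of its centre must match in reverse; since $i-1$ is strictly larger than every letter preceding it in $W_{i-1}$, the letter $i-1$ can only be matched against an equal letter, which on the right side can occur only once $W_{i-2}$ has grown enough to contain a letter of value $i-1$. This forces $i$ to be small. Concretely, I would show that a straddling palindrome of length $\ge 2$ containing the junction letter $(i-1)$ paired across the centre forces $i-1 \le 3$, i.e.\ $i \le 4$, because for larger $i$ the letter $i-1$ has no available partner close enough to the boundary.

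Having reduced to small $i$, I would then simply check the finitely many cases directly against the three palindrome families. For $i = 3$ we have $W_3 = 01223$ with $W_2 = 012$ and $2 \oplus W_1 = 23$, so the junction is $01\underline{2}\mid\underline{2}3$, and the straddling palindrome is $22 = 2\cdot0 \oplus 22$. For $i = 4$ we have $W_4 = 01223234$ with $W_3 = 01223$ and $2 \oplus W_2 = 234$, so the junction is $0122\underline{3}\mid\underline{2}34$, and one reads off the straddling palindromes $232$ and $323$. By the classification in \cite{ZWW17} these are exactly the palindromes $22$, $232$, $323$ (the $i=0$ members of the three families), confirming that no others are straddling and that they occur precisely at stages $i = 3$ and $i = 4$.

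I expect the main obstacle to be making the reduction step fully rigorous: one must argue carefully that every candidate straddling palindrome in $W_i$ either is one of the listed three words or else would require matching a letter of value $i-1$ across the centre against a partner that is provably too far from the boundary (or absent) when $i > 4$. The cleanest way to do this is probably to use the fact that the listed palindromes are the \emph{only} palindromes in $W$ at all, so a straddling palindrome is automatically one of $2i \oplus 22$, $2i \oplus 232$, $2i \oplus 323$; it then remains only to show that for each such palindrome the unique factorisation position at which it straddles a $W_{j-1} \mid 2 \oplus W_{j-2}$ boundary occurs only for $j \in \{3,4\}$, which reduces the whole argument to locating these short factors in the early $W_k$ and verifying they sit across the relevant junction.
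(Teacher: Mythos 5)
Your proposal is essentially correct, and the route you finally settle on (the ``cleanest way'' in your last paragraph) is very close to the paper's, though not identical in its key step. Both arguments start from the same observation: a straddling palindrome must contain the two adjacent boundary letters, namely the final letter $i-1$ of $W_{i-1}$ (Lemma~\ref{l1}) and the initial letter $2$ of $2\oplus W_{i-2}$, and both finish by inspecting $W_3$ and $W_4$. Where you differ is in how you force $i\le 4$: you import the full classification of palindromes from \cite{ZWW17} (the palindrome contains a $2$, hence must be one of $22$, $232$, $323$, whose letters are all in $\{2,3\}$, so $i-1\in\{2,3\}$), whereas the paper stays local and self-contained: since the palindrome contains the factor $(i-1)\cdot 2$ across the boundary, by palindromy it also contains $2\cdot(i-1)$ as a factor, and since every $2$ in the ZWW word is followed by $2$ or $3$, this gives $i-1\in\{2,3\}$ directly, without needing to know all palindromes in advance. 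Your version buys brevity at the cost of depending on an external (and harder) result; the paper's buys independence from that result, which matters since this lemma feeds into the palindrome counts of Section~\ref{S:palindromes}. Your first reduction attempt --- matching the letter $i-1$ ``across the centre'' against a partner that is too far away --- is the weak point of the write-up (it is not made precise and would take real work to rigorise), but you correctly identify it as such and replace it, so the proposal as a whole stands.
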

\begin{proof} A straddling palindrome must include the last letter of $W_{i-1}$ and the first letter of $2 \oplus W_{i-2}$ for~$i \ge 2$. By Lemma \ref{l1} these are $i-1$ and 2 respectively.  Therefore both $2\cdot (i-1)$ and $(i-1) \cdot 2$ must appear in the palindrome. Since 2 is always followed by 2 or 3 in the ZWW word, we must have $i=3$ or $4$. When $i=3$ we have $W_i=01223$  and when $i=4$ we have $W_i=01223234$.  These contain the straddling palindromes 22, 232 and 323.  There can be no other straddling palindromes in the ZWW word.
\end{proof}
\begin{theorem} The total number of palindromes in $W_i$, including single letter palindromes,  is 1, 2, 3, 6 for $i=0,1,2,3$ respectively, and $f_{i+3}-2f_{i-2}$ for $i>3$.
\end{theorem}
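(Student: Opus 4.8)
The plan is to set up a Fibonacci-type recurrence for the count in question. Write $P(i)$ for the total number of occurrences of palindromic factors in $W_i$, where every occurrence is counted separately and single letters are included. First I would record the small cases by direct inspection of $W_0,\dots,W_4$: this gives $P(0)=1$, $P(1)=2$, $P(2)=3$, $P(3)=6$, and $P(4)=11$, matching the claimed values and supplying the base data for the induction.

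The heart of the argument is the decomposition $W_i=W_{i-1}\cdot(2\oplus W_{i-2})$ supplied by Lemma~\ref{lemm-l2}. Every occurrence of a palindromic factor of $W_i$ falls into exactly one of three classes, according to the positions it occupies: those lying entirely in the prefix $W_{i-1}$, those lying entirely in the suffix $2\oplus W_{i-2}$, and the straddling ones, which by definition use both the last letter of $W_{i-1}$ and the first letter of $2\oplus W_{i-2}$. The prefix class contributes exactly $P(i-1)$. For the suffix class I would observe that the substitution $x\mapsto 2\oplus x$ adds the constant $2$ to every letter and so preserves the defining relation $u_j=u_{m+1-j}$ of a palindrome; it is therefore a palindrome-preserving relabelling, placing the palindromic occurrences of $2\oplus W_{i-2}$ in bijection with those of $W_{i-2}$, for a contribution of $P(i-2)$. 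Finally, by Lemma~\ref{L:pal} the only straddling palindromes anywhere in the ZWW word are $22,232,323$, and these straddle only in $W_3$ and $W_4$; hence for $i\ge 5$ the straddling class is empty. This yields
\[
P(i)=P(i-1)+P(i-2)\qquad(i\ge 5).
\]

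To finish, I would check that the proposed closed form $g(i):=f_{i+3}-2f_{i-2}$ obeys the same recurrence: applying $f_n=f_{n-1}+f_{n-2}$ twice gives $g(i-1)+g(i-2)=(f_{i+2}+f_{i+1})-2(f_{i-3}+f_{i-4})=f_{i+3}-2f_{i-2}=g(i)$. Since $g(3)=f_6-2f_1=6=P(3)$ and $g(4)=f_7-2f_2=11=P(4)$, a two-step induction gives $P(i)=g(i)$ for all $i\ge 3$, and in particular for $i>3$, as required.

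The routine bookkeeping (the handful of base cases and the identity for $g$) is not where the difficulty lies. The one point I expect to need genuine care is justifying that the three position classes partition all palindromic occurrences with no double counting, combined with the appeal to Lemma~\ref{L:pal} to eliminate the straddling class once $i\ge 5$. A secondary subtlety worth flagging is that the straddling contribution is nonzero precisely for $i=3,4$ (namely $1$ and $2$ extra palindromes respectively), which is exactly why those indices, together with the still smaller ones where $f_{i-2}$ is undefined, are listed separately rather than being absorbed into the uniform formula.
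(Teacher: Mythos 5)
Your proposal is correct and follows essentially the same route as the paper: the decomposition $W_i = W_{i-1}\cdot(2\oplus W_{i-2})$ from Lemma~\ref{lemm-l2}, the three-way classification of palindromic occurrences, the appeal to Lemma~\ref{L:pal} to kill the straddling class for large $i$, and the resulting Fibonacci recurrence with base cases at $i=3,4$. Your version is slightly more explicit than the paper's (it spells out the relabelling bijection for the suffix block and the value $P(4)=11$), but there is no substantive difference in approach.
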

\begin{proof} We write $P(W_i)$ and $S(W_i)$ for the total number of palindromes in $W_i$ and the number of straddling palindromes in $W_i$ respectively. It is easily checked that, for $i=0,1,2 \text{ and }3$,  $P(W_i)$ equals 1, 2, 3 and 6  respectively. Since the total number of palindromes in $W_i$ equals the total number in $2 \oplus W_i$ we get, from Lemma~\ref{lemm-l2}, that $$P(W_{i+2})=P(W_{i+1})+S(W_{i+2})+P(W_i).$$ From Lemma \ref{L:pal}, $S(W_i)=0$ when $i>4$, so for $i > 2$, we have $$P(W_{i+2})=P(W_{i+1})+P(W_i).$$ This recurrence gives $P(W_i)=f_{i+3}-2f_{i-2}$.
\end{proof}

We write $D(W_i)$ for the number of distinct palindromes in $W_i$.  It is easily checked that $D(W_i)=i+1$ for $i=$ 0, 1, 2 and $D(W_3)=5$.

\begin{theorem} For $i \ge 3$, we have \begin{equation} \label{eq:distinct-pal} D(W_i)=\lfloor 5i/2 \rfloor -2. \end{equation}
\end{theorem}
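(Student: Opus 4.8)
The plan is to split the distinct palindromes of $W_i$ into the single-letter ones and the nontrivial ones, count each class separately, and add them. First I would record that every length-one factor is a palindrome and that the set of letters occurring in $W_i$ is exactly $\{0,1,\dots,i\}$: this follows by a short induction on the decomposition $W_i = W_{i-1}\cdot 2\oplus W_{i-2}$ of Lemma~\ref{lemm-l2}, since $W_{i-1}$ contributes the letters $0,\dots,i-1$ and $2\oplus W_{i-2}$ the letters $2,\dots,i$. Hence there are exactly $i\+1$ single-letter palindromes, and these are disjoint from the nontrivial ones, which have length at least $2$.

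For the nontrivial palindromes I would use the classification of Zhang \textit{et al.} quoted above: every nontrivial palindrome of $W$ has the form $2i\oplus 22$, $2i\oplus 232$ or $2i\oplus 323$. Reparametrising, these are precisely the words $(2j)(2j)$, $(2j)(2j\+1)(2j)$ and $(2j\+1)(2j)(2j\+1)$ with $j\ge 1$. The heart of the argument is the claim that the set of nontrivial palindromes occurring in $W_i$ is exactly
\begin{align*}
\{(2j)(2j): 1\le j\le \lfloor (i\-1)/2\rfloor\} &\cup \{(2j)(2j\+1)(2j): 1\le j\le \lfloor i/2\rfloor\-1\}\\
&\cup \{(2j\+1)(2j)(2j\+1): 1\le j\le \lfloor i/2\rfloor\-1\}.
\end{align*}
I would prove this by strong induction on $i$, using the same three-way split of palindromes as in Lemma~\ref{L:pal}: every palindrome of $W_i$ lies in $W_{i-1}$, lies in $2\oplus W_{i-2}$, or straddles the boundary. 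Since adding $2$ to every letter is a length- and palindrome-preserving bijection of factors, the nontrivial palindromes of $2\oplus W_{i-2}$ are exactly the shifts $2\oplus q$ of the nontrivial palindromes $q$ of $W_{i-2}$, and such a shift carries the index $j$ of each of the three forms to $j\+1$. By Lemma~\ref{L:pal} there are no straddling palindromes once $i\ge 5$, so for $i\ge 5$ the set for $W_i$ is just the union of the set for $W_{i-1}$ with the $j\mapsto j\+1$ shift of the set for $W_{i-2}$. The verification then reduces to checking that, for each of the three forms, the union of an interval $[1,\cdot]$ coming from $W_{i-1}$ with an interval $[2,\cdot]$ coming from the shift of $W_{i-2}$ is again an interval $[1,\cdot]$ of the claimed length, i.e.\ that the two ranges meet without a gap and that the larger upper endpoint is the claimed one.

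The base cases $i=3$ and $i=4$ are where I expect the only real subtlety to lie, and they must be treated by hand because this is exactly where the straddling palindromes matter: in $W_3$ the straddling palindrome $22$ supplies the $j=1$ member of the double-letter family, and in $W_4$ the straddling palindromes $232$ and $323$ supply the $j=1$ members of the two triple families, matching the claimed sets for $i=3,4$. Beyond the base cases the work is entirely the floor-function bookkeeping just described; the main obstacle is simply to carry this bookkeeping out cleanly for the even and odd parities of $i$ and to confirm that no palindrome is double-counted across the two parts of the decomposition.

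Finally I would conclude by counting. The displayed set has $\lfloor (i\-1)/2\rfloor + 2\big(\lfloor i/2\rfloor\-1\big)$ elements, so
\begin{equation*}
D(W_i) = (i\+1) + \lfloor (i\-1)/2\rfloor + 2\lfloor i/2\rfloor - 2,
\end{equation*}
and separating the cases $i$ even and $i$ odd reduces the right-hand side to $\lfloor 5i/2\rfloor - 2$, as required.
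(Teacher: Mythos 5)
Your proposal is correct and rests on the same ingredients as the paper's proof: the decomposition $W_i = W_{i-1}\cdot 2\oplus W_{i-2}$, the absence of straddling palindromes for $i>4$ (Lemma~\ref{L:pal}), and the fact that new palindromes arise only as $2\oplus$ shifts of earlier ones. The only difference is presentational --- you enumerate the full set of distinct palindromes explicitly and count it, whereas the paper packages the same observation as the recurrence $D(W_{i+2})=D(W_i)+5$ with base cases $D(W_3)=5$, $D(W_4)=8$.
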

\begin{proof} Since $W_{i+2}=W_{i+1} \cdot 2 \oplus W_i$ and there are no straddling palindromes in $W_i$ when $i>4$, any palindrome appearing in $W_{i+2}$ for the first time must occur in $2 \oplus W_i$.  It must equal $2 \oplus p$ where $p$ is a palindrome that occurred for the first time in $W_i$.  From this we see that 22 and 3 first occur in $W_3$, 44 and 5 first in $W_5$, 66 and 7 first in $W_7$, and so on. Similarly 4, 232 and 323 occur for the first time in $W_4$, 6, 454 and 545 for the first time in $W_6$, and so on. From this we see that $D(W_{i+2})=5+D(W_i)$ for $i\ge 3$.  With the initial conditions $D(W_3)=5$ and $D(W_4)=8$ we obtain (\ref{eq:distinct-pal}).
\end{proof}

\section{Distinct Squares in $W_k$} \label{S:squares}

Squares in the finite Fibonacci words have been characterised in \cite{FS99} and \cite{IMS97}. In this section we characterise the distinct squares in $W_k$.  Our analysis here depends on the factorisation of $W_k$ obtained in Lemma \ref{lemm-l2}.  We will also need the following lemma.
\begin{lemma} Let $i$ and $k$ be integers such that $0 \le i \le \lfloor k/2 \rfloor$.  Then  the length $|W_{k-2i}|$ suffix of $W_k$ is $2i \oplus W_{k-2i}$.
\begin{proof} We use induction on $k$.  The lemma  holds vacuously when $k=0$ and $k=1$.  Suppose it holds up to $k-1$.  By Lemma \ref{lemm-l2},
$$W_k=W_{k-1} \cdot 2 \oplus W_{k-2}.$$ We see immediately that the lemma holds when $i=1$.  For $1 < i \le \lfloor k/2 \rfloor$ the length $|W_{k-2i}|$ suffix of $W_k$ is the length $|W_{k-2-2(i-1)}|$ suffix of $2 \oplus W_{k-2}$ which is $2 \; \oplus$ the length $|W_{k-2i}|$ suffix of $W_{k-2}$. By the induction hypothesis this is $$2 \oplus (2(i-1) \oplus W_{k-2-2(i-1)})=2i \oplus W_{k-2i}$$ and the lemma is proved.
\end{proof}
\end{lemma}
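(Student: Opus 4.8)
The plan is to fix $k$ and induct on $i$ (rather than on $k$), because a single application of Lemma~\ref{lemm-l2} strips off exactly one ``$2\oplus$'' layer, and this is precisely what separates consecutive values of $i$. Throughout I will use two elementary facts about the shift operator: it commutes with concatenation, $n\oplus(u\cdot v)=(n\oplus u)\cdot(n\oplus v)$, and hence with the operation of taking a suffix; and shifts compose additively, $n\oplus(m\oplus w)=(n\+ m)\oplus w$. I will also use that a suffix of a suffix is a suffix: if $L_2\le L_1$, then the length-$L_2$ suffix of $W_k$ coincides with the length-$L_2$ suffix of the length-$L_1$ suffix of $W_k$. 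Since $|W_j|=f_{j+2}$ is increasing in $j$, the suffix lengths arising below are automatically nested, so this reduction is always legitimate.

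The base case $i=0$ is immediate, as the length-$|W_k|$ suffix of $W_k$ is $W_k=0\oplus W_k$. For the inductive step, assume the length-$|W_{k-2i}|$ suffix of $W_k$ equals $2i\oplus W_{k-2i}$, and suppose $i\+ 1\le\lfloor k/2\rfloor$, so that $k\- 2i\ge 2$ and Lemma~\ref{lemm-l2} applies to $W_{k-2i}$, giving $W_{k-2i}=W_{k-2i-1}\cdot 2\oplus W_{k-2i-2}$. Applying the shift $2i\oplus(\cdot)$ yields
$$2i\oplus W_{k-2i}=\bigl(2i\oplus W_{k-2i-1}\bigr)\cdot\bigl(2i\oplus(2\oplus W_{k-2i-2})\bigr).$$
Because $|W_{k-2i-2}|\le|W_{k-2i}|$, the length-$|W_{k-2i-2}|$ suffix of $W_k$ is the length-$|W_{k-2i-2}|$ suffix of $2i\oplus W_{k-2i}$, which by the display above is $2i\oplus(2\oplus W_{k-2i-2})=2(i\+ 1)\oplus W_{k-2i-2}$. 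This is exactly the claim for $i\+ 1$, closing the induction.

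The only genuine care needed is the index bookkeeping. One must confirm that $k\- 2i\ge 2$ throughout the inductive range, so that Lemma~\ref{lemm-l2} is validly invoked; this is exactly the content of the hypothesis $i\+ 1\le\lfloor k/2\rfloor$. One must also confirm that the relevant suffix lengths are nested, which follows from the monotonicity of $f_{j+2}$. Beyond these checks the argument is pure formal manipulation of the shift operator against the recurrence $W_k=W_{k-1}\cdot 2\oplus W_{k-2}$, so I expect no substantive obstacle; essentially all the content is already packaged in Lemma~\ref{lemm-l2}.
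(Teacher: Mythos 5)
Your proof is correct. It rests on the same two ingredients as the paper's proof --- the recurrence of Lemma~\ref{lemm-l2} and the fact that $n\oplus(\cdot)$ commutes with taking suffixes of nested lengths --- but it organizes the induction differently: you fix $k$ and induct on $i$, first invoking the inductive hypothesis to locate the suffix inside $2i\oplus W_{k-2i}$ and then applying the recurrence at that bottom level, whereas the paper inducts on $k$, applies the recurrence once at the top level to write $W_k=W_{k-1}\cdot 2\oplus W_{k-2}$, and then invokes the hypothesis at the pair $(k-2,\,i-1)$. The two inductions unroll the same recursion from opposite ends and are equally elementary; yours is marginally cleaner in that the base case $i=0$ is genuinely trivial and each step is a single application of Lemma~\ref{lemm-l2}, while the paper's version needs strong induction in $k$ (the hypothesis is used at $k-2$, not $k-1$) and handles $i=1$ as a separate case. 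One pedantic point common to both arguments: Lemma~\ref{lemm-l2} as stated covers only $W_m$ for $m\ge 3$, so the instance $W_2=W_1\cdot 2\oplus W_0$ needed when $k-2i=2$ (respectively, when $k=2$ in the paper's proof) must be checked directly; it holds, since $012=01\cdot 2$, so this affects neither proof.
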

With Lemma \ref{lemm-W-2} this gives the following result.
\begin{lemma} \label{suffix factoristion}  For $k\ge 0$,
$$W_{k+1}=S_{k,k}\cdot S_{k,k-2}\cdot \dots \cdot S_{k,k-2\lfloor k/2 \rfloor}\cdot (k+1)$$ where $S_{k,i}$ is the length $|W_i|$ suffix of $W_k$.
\end{lemma}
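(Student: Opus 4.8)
The plan is to derive this directly by combining the immediately preceding (suffix) lemma with Lemma~\ref{lemm-W-2}; no fresh induction is needed. By that suffix lemma, for each $i$ with $0 \le i \le \lfloor k/2\rfloor$ the length $|W_{k-2i}|$ suffix of $W_k$ is exactly $2i \oplus W_{k-2i}$, that is, $S_{k,k-2i} = 2i \oplus W_{k-2i}$. Consequently the asserted right-hand side, whose factors are $S_{k,k}, S_{k,k-2}, \dots, S_{k,k-2\lfloor k/2\rfloor}$ (indexed by $i = 0,1,\dots,\lfloor k/2\rfloor$ via second index $k-2i$), can be rewritten as
\begin{equation*}
\prod_{i=0}^{\lfloor k/2\rfloor}\big(2i \oplus W_{k-2i}\big)\cdot (k+1).
\end{equation*}
It therefore suffices to show that this product equals $W_{k+1}$.

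Next I would identify this product with the factorisation of $W_{k+1}$ supplied by Lemma~\ref{lemm-W-2}, treating the two parities of $k$ separately. If $k = 2m-1$ is odd, then $k+1 = 2m$ and $\lfloor k/2\rfloor = m-1$; substituting $j = m-i$ (so that $i = 0,\dots,m-1$ corresponds to $j = m,\dots,1$) turns $2i \oplus W_{k-2i}$ into $2(m-j) \oplus W_{2j-1}$ with trailing letter $2m$, which is precisely the even-index formula~\eqref{even}. If instead $k = 2m-2$ is even, then $k+1 = 2m-1$ and again $\lfloor k/2\rfloor = m-1$; the same substitution $j = m-i$ converts $2i \oplus W_{k-2i}$ into $2(m-j)\oplus W_{2j-2}$ with trailing letter $2m-1$, matching the odd-index formula~\eqref{odd}. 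In each case the product above is literally the right-hand side of Lemma~\ref{lemm-W-2}, hence equals $W_{k+1}$. The base case $k=0$ is checked directly: $S_{0,0}$ is the length-one suffix of $W_0 = 0$, so $W_1 = 0 \cdot 1 = 01$.

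I expect the only real obstacle to be the index bookkeeping: one must confirm that the range $0 \le i \le \lfloor k/2\rfloor$ in the suffix lemma matches exactly the list of suffix blocks written in the statement, and that the reversing substitution $j = m-i$ sends the endpoints correctly in both parity cases so that the reindexed products coincide termwise with \eqref{odd} and \eqref{even}. Once the indices are pinned down the identification is immediate, since the suffix lemma guarantees that each block $2i \oplus W_{k-2i}$ is genuinely the corresponding suffix $S_{k,k-2i}$ of $W_k$, while Lemma~\ref{lemm-W-2} guarantees that these blocks concatenate, with the final letter $k+1$, to $W_{k+1}$.
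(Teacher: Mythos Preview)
Your proposal is correct and follows exactly the route the paper indicates: the paper's entire proof is the single sentence ``With Lemma~\ref{lemm-W-2} this gives the following result,'' and you have simply written out the details of that combination, including the parity-dependent reindexing that matches the product $\prod_{i=0}^{\lfloor k/2\rfloor}(2i\oplus W_{k-2i})\cdot(k+1)$ to formulas~\eqref{odd} and~\eqref{even}.
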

As an example of the factorisation presented in the above lemma,  consider the following:
\begin{eqnarray*}
W_4&=&01223234\\
W_5
&=& 01223234 \cdot 234 \cdot 4 \cdot 5
\end{eqnarray*} We now prove the main result of this section. Since $W_k$ is a prefix of $W_{k+1}$ the set of squares in $W_{k+1}$ contains all those squares that appeared in $W_k$.  Other squares in $W_{k+1}$ we call \emph{new squares}.
\begin{theorem} \label{T:distinct-squares}
\label{lemm-distinct}
For every $k \ge 0$:
\begin{itemize}
\item[(a)]
$W_{k+1}$ introduces $\lfloor k/2 \rfloor$ new squares
of periods $f_{k},f_{k-2},\ldots,f_{k-2\lfloor k/2 \rfloor +2}$ \thinspace;
\item[(b)]
$W_{k+1}$ contains exactly $\lfloor k/2 \rfloor \lceil k/2 \rceil$ distinct squares.
\end{itemize}
\begin{proof} Consider the factorisation in the last lemma.  Since $S_{k,i+2}$ is a suffix of $S_{k,i}$, $W_{k+1}$ contains squares $S_{k,k-2i}^2$ for $i=1,\dots,\lfloor k/2 \rfloor.$ Each of these squares has period $|W_{k-2i}|$ which equals $f_{k-2i+2}$ for $i=1,\dots,\lfloor k/2 \rfloor$.  We will show that these are all new squares and that they are the only new squares in $W_{k+1}$.
Note that no new square can occur as a factor of $S_{k,k-2i}$ since such a square would be a factor of $W_k$ and therefore not new. Neither can a new square containing the letter $k+1$ since this occurs only once in $W_{k+1}$. So any new square must contain the last letter of $S_{k,k-2i}$ for some $i$.  This letter is always $k$ and no square in $S_k$ contains $k$ since $k$ occurs only once in $S_k$. It follows that the squares described above are indeed new.

Now we show there are no other new squares in $W_{k+1}$.  If there were others they would contain at least two copies of $k$.  Such a square can not contain more than two copies since the distances between consecutive occurrences of $k$ are different. Thus a new square must contain the $k$ at the end of $S_{k,k-2i}$ for some $i$, and the $k$ at the start of $S_{k,k-2(i+1)}$.  If the square is not $S_{k,k-2i}^2$ then its first half contains the $k$ at the end of $S_{k,k-2i}$ and the letter following that $k$ which is $2i$.  For it to be a square $2i$ should equal the letter at the end of $S_{k,k-2i+2}$ which is $i+1$ or $k$ and does not equal $2i$.  Therefore we cannot have such a square and the only new squares are those noted above.  This completes the proof of part (a).

The number of distinct squares in $W_{k+1}$ is then $\sum_{i=0}^k \lfloor i/2 \rfloor$ which equals $\lfloor k/2 \rfloor \lceil k/2 \rceil$. This is part (b).
\end{proof}
\end{theorem}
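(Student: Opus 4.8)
The plan is to hang everything on the suffix factorisation of Lemma~\ref{suffix factoristion}, namely
$$W_{k+1}=S_{k,k}\cdot S_{k,k-2}\cdots S_{k,k-2\lfloor k/2\rfloor}\cdot(k+1),$$
together with the explicit form $S_{k,k-2i}=2i\oplus W_{k-2i}$. First I would record three facts read directly off this factorisation and Lemma~\ref{l1}: each factor $S_{k,k-2i}$ begins with the letter $2i$, ends with the letter $k$, and contains $k$ exactly once; consequently $k$ occurs in $W_{k+1}$ precisely at the right end of each factor, that is $\lfloor k/2\rfloor+1$ times in all. Because $|W_{k-2i}|$ increases with $i$, the shorter factor $S_{k,k-2i}$ is a \emph{suffix} of the longer factor $S_{k,k-2(i-1)}$; hence at the junction between these two consecutive factors the word $S_{k,k-2i}$ occurs twice in a row, producing the square $S_{k,k-2i}^2$ of period $|W_{k-2i}|=f_{k-2i+2}$. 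Letting $i$ run over $1,\ldots,\lfloor k/2\rfloor$ yields the $\lfloor k/2\rfloor$ candidate squares of part~(a), with periods $f_k,f_{k-2},\ldots,f_{k-2\lfloor k/2\rfloor+2}$.

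To finish part~(a) I would argue these candidates are new and are the only new squares. \emph{New:} a new square cannot lie inside a single factor $S_{k,k-2i}$ (it would then be a factor of $W_k$) nor contain the unique letter $k+1$; since each $S_{k,k-2i}^2$ contains the letter $k$ while $k$ occurs only once inside any single factor, none of these squares fits inside one factor, so each is new. \emph{Only ones:} any further new square must straddle a junction and hence contain the letter $k$; being a square with identical halves, it then contains $k$ in each half, so at least two copies. It cannot contain three, since three copies would force two \emph{equal} gaps between consecutive $k$'s, whereas the consecutive gaps are the pairwise distinct Fibonacci numbers $|S_{k,k-2i}|=f_{k-2i+2}$. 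Thus a new square contains exactly two $k$'s, necessarily consecutive, at the ends of $S_{k,k-2(i-1)}$ and $S_{k,k-2i}$ for some $i$, and its period equals their gap $f_{k-2i+2}$. A one-letter look-ahead then pins it down: if the square extended even one letter past the first of these two $k$'s, periodicity would force the letter $2i$ (which follows the first $k$, being the initial letter of $S_{k,k-2i}$) to equal the letter following the second $k$, which is $2i+2$ (the initial letter of $S_{k,k-2(i+1)}$) or else the terminal letter $k+1$; both are impossible, as $2i<k+1$. Hence the square's first half ends exactly at the first $k$, which forces it to be $S_{k,k-2i}^2$.

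For part~(b) I would telescope over the construction. Since $W_k$ is a prefix of $W_{k+1}$, the distinct squares of $W_{k+1}$ are exactly those of $W_k$ together with the squares first appearing at stage $k+1$; by part~(a) the latter number $\lfloor k/2\rfloor$, and each distinct square has a well-defined stage of first appearance. Summing over stages (with $W_0$ and $W_1$ square-free) gives the count $\sum_{j=0}^{k}\lfloor j/2\rfloor$, and the elementary identity $\sum_{j=0}^{k}\lfloor j/2\rfloor=\lfloor k^2/4\rfloor=\lfloor k/2\rfloor\lceil k/2\rceil$ delivers part~(b).

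The main obstacle is the \emph{uniqueness} half of part~(a): showing that no straddling square other than the $S_{k,k-2i}^2$ exists. The two ingredients that make this work are the distinctness of the consecutive-$k$ gaps (forcing at most two $k$'s and fixing the period) and the one-letter look-ahead comparing the initial letters $2i$ and $2i+2$ of adjacent suffix-factors (ruling out any shifted square). Everything else---the candidate squares, their periods, their newness, and the telescoped count---follows routinely from Lemmas~\ref{l1}, \ref{lemm-l2} and~\ref{suffix factoristion} once these two points are secured.
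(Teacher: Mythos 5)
Your proposal is correct and follows essentially the same route as the paper: the suffix factorisation of Lemma~\ref{suffix factoristion} produces the candidate squares $S_{k,k-2i}^2$, newness follows from the single occurrence of $k$ in each factor, uniqueness from the pairwise distinct gaps between consecutive $k$'s plus the one-letter look-ahead comparing initial letters of adjacent factors, and part~(b) by telescoping $\sum_{j=0}^{k}\lfloor j/2\rfloor$. Your handling of the look-ahead indices ($2i$ versus $2i+2$ or $k+1$) is in fact stated more carefully than in the paper's own text.
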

Recall that a \textit{run} is a periodic factor whose length is at least twice its period.  The computation of runs is important algorithmically \cite{KK00,M89}.  The maximum number of runs in any word of length $n$ is  denoted  $\rho(n)$: recently Bannai \emph{et al.}~\cite{BIINTT14} proved the long-standing conjecture that $\rho(n)<n$ for all $n$. If $X[1..2p+l]$ is a run with period $p$ and $l>0$ then $X[1..2p]$ and $X[2..2p+1]$ are both squares.  We saw in the proof of Theorem~\ref{T:distinct-squares} that such pairs of squares do not exist in $W_k$.  We therefore have the following result.
\begin{corollary} Every run in the ZWW word is a square.
\end{corollary}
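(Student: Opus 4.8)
The plan is to reduce the corollary to a statement about squares and then feed it into the analysis already carried out in the proof of Theorem~\ref{T:distinct-squares}. As observed just before the statement, a run $X[1..2p+l]$ with $l\ge 1$ produces two squares of the same period $p$, namely $X[1..2p]$ and $X[2..2p+1]$, starting at consecutive positions. So it suffices to prove that $W$ contains no square that can be slid one letter to the right into another square of the same period; equivalently, no square can be extended by a single letter while keeping its period. I would begin by writing $X[1..2p]=uu$ and recording the two facts forced by the period: $X[p]$ is the last letter of $u$, and $X[p+1]=X[1]$ is the first letter of $u$. Then $X[2..2p+1]=vv$, where $v$ is the cyclic rotation of $u$ by one position, whose last letter is $X[2p+1]=X[p+1]=X[1]$.

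The structural input I would draw from Theorem~\ref{T:distinct-squares} is that every square in $W$ equals $(2j\oplus W_m)^2$ for some $j\ge 1$, $m\ge 0$. Since $W_m$ starts with $0$ and, by Lemma~\ref{l1}, ends with its unique largest letter $m$, each root $2j\oplus W_m$ starts with $2j$, ends with its unique largest letter $2j+m$, and has period $|W_m|=f_{m+2}$. Writing $u=2i\oplus W_m$, I get $X[1]=2i$ and $X[p]=2i+m$. Because different values of $m$ give different periods $f_{m+2}$, the second square $vv$ (also of period $p$) must have the same shape $v=2i'\oplus W_m$; in particular the largest letter of $v$ sits only at its final position.

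The contradiction then falls out for $m\ge 1$: the last letter of $v$ is $X[1]=2i$, so the largest letter of $v$ is $2i$, yet $v$ also contains $X[p]=2i+m>2i$ at its penultimate position --- impossible. This disposes of every period $f_{m+2}$ with $m\ge 1$. The period-one case ($m=0$), a letter repeated three times, I would handle separately through the morphism $W=\phi(W)$: a tripled even letter $c=2i$ forces its first two copies to be complete blocks $\phi(2i-1)$ (a copy opening a block $\phi(2i)=(2i)(2i+1)$ would be followed by $2i+1\ne c$), whose preimages are two adjacent occurrences of the odd letter $2i-1$; but every odd letter $2i-1$ appears only as the second symbol of $\phi(2i-2)=(2i-2)(2i-1)$, so it never occurs twice in a row --- a contradiction. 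Hence no letter is tripled, and together with the previous case every run has exponent exactly $2$.

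The step needing the most care is the structural input: I must be certain the characterisation in Theorem~\ref{T:distinct-squares} governs \emph{every} occurrence of \emph{every} square uniformly, not merely the ``new'' squares at their first appearance, so that both $uu$ and the shifted $vv$ are pinned to the form $2j\oplus W_m$; and I must check that a length-$2p$ factor inside a run of least period $p$ really is a primitive square of period $p$, so that the ``largest letter at the end'' property is legitimately available. The period-one case genuinely has to be separated out, since there $u$ equals its own rotation and the maximal-letter argument collapses.
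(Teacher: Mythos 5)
Your proof is correct, and it amounts to a fully worked-out version of an argument the paper only gestures at. Both you and the paper start from the same reduction: a run $X[1..2p+l]$ with $l>0$ would yield squares of period $p$ at two consecutive starting positions. The paper then simply asserts that such pairs were already ruled out ``in the proof of Theorem~\ref{T:distinct-squares}'', i.e.\ it leans on the positional analysis there (new squares occur only as $S_{k,k-2i}^2$ at isolated positions). You instead extract from Theorem~\ref{T:distinct-squares} the word-level characterisation --- every square of $W$ is $(2j\oplus W_m)^2$ with $j\ge 1$ --- and observe that the one-letter rotation of the root would have to be another root of the same period $f_{m+2}$, hence of the form $2j'\oplus W_m$ with its unique maximal letter in final position, whereas the rotation actually carries the maximal letter $2j+m$ to the penultimate position when $m\ge 1$. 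This buys you something real: arguing with square \emph{words} covers every occurrence automatically, whereas the theorem's proof, strictly read, only locates new square words, not repeat occurrences of old ones. The cost is that $p=1$ (a tripled letter) must be treated separately, which you do correctly via the morphism $\phi$: odd letters never occur twice in succession, so an even letter cannot occur three times in succession. The two points you flag --- that the characterisation pins down both $uu$ and the shifted $vv$ as words, and that a length-$2p$ window of a run of least period $p$ is a primitively rooted square (a standard Fine--Wilf argument) --- are exactly the right things to check, and both go through.
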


\section{Total number of squares in $W_k$} \label{S:total-squares}

Let $T(W_i)$ denote the total number of squares in $W_i$ (i.e., counted according to multiplicity).

\begin{theorem} For all $i \ge 1$, we have $T(W_i)=f_{i}-1$.
\begin{proof}  Clearly,
\begin{equation} \label{e2} T(W_{i+2})=T(W_{i+1})+T(2 \oplus W_{i})+S(W_{i+2})
 \end{equation} where $S(W_{i+2})$ is the number of squares which straddle the boundary between $W_{i+1}$ and $2 \oplus W_i$. We evaluate this in the following claim.\\

\noindent \textbf{Claim.} For $i<3$, we have $S(W_i)=0$, and for $i\ge 3$, we have $S(W_i)=1$.

\noindent \textbf{Proof of Claim.}  Recall that $|W_i|=f_{i+2}$. The cases for $i<3$ are easily checked so we assume $i \ge 3$. To simplify notation we temporarily write $w$ for $W_i$ so that a straddling square must begin in $w[1..f_{i+1}]$ and end in $w[f_{i+1}+1..f_{i+2}]$. We first show that, for such $i$, $S(w)\ge 1$. Note that, for $i\ge 2$,
\begin{eqnarray*}
w &=& W_{i-1} \cdot 2 \oplus W_{i-2}\\
&=& W_{i-2}\cdot 2 \oplus W_{i-3} \cdot 2 \oplus (W_{i-3} \cdot 2 \oplus W_{i-4})\\
&=& W_{i-2}\cdot (2 \oplus W_{i-3})^2 \cdot  4 \oplus W_{i-4}.
\end{eqnarray*} We see that  this contains $(2 \oplus W_{i-3})^2$ which is straddling, so $S(w)\ge 1 $.
For example,
$$W_5=01223 \cdot 234 \cdot 234 \cdot 45.$$ Using (\ref{introe0}) we have
\begin{eqnarray*}
&& w[1..f_{i}]=W_{i-2}\\
&& w[f_i+1..f_i+f_{i-1}]=w[f_i+1..f_{i+1}]=2 \oplus W_{i-3}\\
&& w[f_{i+1}+1..f_{i+1}+f_{i-1}]=2 \oplus W_{i-3}\\
&& w[f_{i+1}+f_{i-1}+1..f_{i+1}+f_{i-1}+f_{i-2}]=w[f_{i+1}+f_{i-1}+1..f_{i+2}]=4 \oplus W_{i-4}
\end{eqnarray*}
From \eqref{introe0} and Lemma \ref{l1}, we see that $$w[f_{i+1}]=w[f_{i+1}+f_{i-1}]=i-1$$ and there are no occurrences of $i-1$ in $w$ before position $f_{i+1}$. So each half of the Square $(2 \oplus W_{i-3})^2$ ends in $i-1$. Thus any straddling square must contain both these occurrences of $i-1$. If it contained more than two occurrences of $i-1$ it would have to contain both of these and its first half would contain $w[f_{i+1}..f_{i+1}+f_{i-1}]$ and so its period would be at least $f_{i-1}+1$. But its second half would be contained in $w[f_{i+1}+f_{i-1}+1..f_{i+2}]$ which has length $$f_{i+2}-f_{i+1}-f_{i-1}=f_{i-2}$$ which is less than $f_{i-1}+1$ so no such square can exist.  We conclude that any straddling square in $w$ contains exactly these two instances of $i-1$.  Its period must therefore be  $f_{i-1}$.                                                                                                                        We are therefore asking whether there is a square of length $2f_{i-1}$ in $(2 \oplus W_{i-3})^2 \cdot  4 \oplus W_{i-4}$ which is not a prefix.  If there were such a square there would also be one in $ W_{i-3}^2 \cdot  2 \oplus W_{i-4}$.  This would necessarily contain the 0 at the start of the second $W_{i-3}$ but this is the only 0 in the word other than the initial 0.  We conclude that no such square exists and the only straddling square is $(2 \oplus W_{i-3})^2$.  So for $i\ge 3$, $S(W_i)=1$.

Using this lemma, and noting that $T(2 \oplus W_{i})=T( W_{i})$, equation (\ref{e2}) becomes
\begin{equation} \label{e3} T(W_{i+2})=T(W_{i+1})+T( W_{i})+1
 \end{equation} with $T(W_1)=T(W_2)=0$.  The solution to this is $T(W_i)=f_i-1$.
\end{proof}
\end{theorem}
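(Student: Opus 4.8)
The plan is to establish the recurrence
\begin{equation*}
T(W_{i+2}) = T(W_{i+1}) + T(W_i) + S(W_{i+2})
\end{equation*}
and then pin down the straddling count $S(W_{i+2})$ precisely, since everything reduces to understanding how many squares cross the boundary in the decomposition $W_{i+2} = W_{i+1}\cdot 2\oplus W_i$. The recurrence itself is immediate: any square in $W_{i+2}$ either lies entirely in the prefix $W_{i+1}$, lies entirely in the suffix $2\oplus W_i$, or straddles the boundary between them, and these three cases are mutually exclusive and exhaustive when counted with multiplicity. Because adding a constant to every letter is a bijection that preserves the square structure, I have $T(2\oplus W_i)=T(W_i)$, which gives the displayed recurrence. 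So the whole theorem hinges on the claim that $S(W_i)=0$ for $i<3$ and $S(W_i)=1$ for $i\ge 3$.

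To prove the claim, first I would rewrite $w:=W_i$ using Lemma~\ref{lemm-l2} twice to expose the internal structure $w = W_{i-2}\cdot (2\oplus W_{i-3})^2\cdot 4\oplus W_{i-4}$, which immediately exhibits one straddling square, namely $(2\oplus W_{i-3})^2$, so $S(w)\ge 1$. The harder half is the upper bound: showing there is no \emph{other} straddling square. Here the key device is Lemma~\ref{l1}, which guarantees that the largest letter of each block $W_j$ occurs exactly once and is strictly larger than every preceding letter. Using the explicit positions of the four factors (computed via \eqref{introe0}), I would locate the two occurrences of the letter $i-1$ — one at position $f_{i+1}$ ending the left copy of $2\oplus W_{i-3}$, and one at position $f_{i+1}+f_{i-1}$ ending the right copy — and observe that $i-1$ appears nowhere earlier in $w$. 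Any straddling square must therefore contain both of these occurrences, and a length-counting argument rules out a third occurrence: a square containing three copies of $i-1$ would force period at least $f_{i-1}+1$, but the remaining suffix $4\oplus W_{i-4}$ has length only $f_{i-2}<f_{i-1}+1$, a contradiction.

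This forces any straddling square to have period exactly $f_{i-1}$, so the only remaining question is whether some length-$2f_{i-1}$ square other than the known prefix $(2\oplus W_{i-3})^2$ can sit inside $(2\oplus W_{i-3})^2\cdot 4\oplus W_{i-4}$. I would reduce this (subtracting $2$ from every letter) to asking for such a square inside $W_{i-3}^2\cdot 2\oplus W_{i-4}$, and then invoke the fact that $0$ occurs exactly once inside each $W_{i-3}$: a non-prefix square would have to contain the single $0$ that opens the second copy of $W_{i-3}$, and no other $0$ is available to match it, so no such square exists. Hence $S(W_i)=1$ for $i\ge 3$, and after checking the small cases $i<3$ directly, the recurrence collapses to $T(W_{i+2})=T(W_{i+1})+T(W_i)+1$ with $T(W_1)=T(W_2)=0$, whose solution is $T(W_i)=f_i-1$.

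The main obstacle, as usual in these boundary analyses, is the uniqueness half of the straddling claim — controlling \emph{all} possible straddling squares rather than exhibiting one. The argument depends delicately on tracking exactly where the rare letters ($i-1$ and $0$) sit, so the bookkeeping of positions via \eqref{introe0} and the single-occurrence facts from Lemma~\ref{l1} must be kept rigorous; this is where I would expect to spend most of the effort.
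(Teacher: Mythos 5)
Your proposal is correct and follows essentially the same route as the paper's own proof: the same three-way decomposition giving the recurrence, the same double application of Lemma~\ref{lemm-l2} to expose $(2\oplus W_{i-3})^2$, the same tracking of the two occurrences of $i-1$ to force period $f_{i-1}$, and the same reduction to $W_{i-3}^2\cdot 2\oplus W_{i-4}$ using the scarcity of the letter $0$. No substantive differences to report.
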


\section{Lyndon factors} \label{S:Lyndon}

\begin{theorem}
For all $n \geq 0$, $W_n$ is a Lyndon word.
\end{theorem}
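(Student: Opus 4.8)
The plan is to prove that each $W_n$ is Lyndon by induction on $n$, using the recurrence $W_{n+1} = W_n \cdot 2\oplus W_{n-1}$ from Lemma~\ref{lemm-l2} together with the known fact that if $u$ and $v$ are Lyndon words with $u < v$, then the concatenation $uv$ is also Lyndon. The base cases $W_0 = 0$, $W_1 = 01$, $W_2 = 012$ are single-symbol or short enough to verify directly as Lyndon words. For the inductive step I would like to write $W_{n+1}$ as a concatenation of two Lyndon pieces, the left piece being $W_n$ (Lyndon by the induction hypothesis) and the right piece being $2\oplus W_{n-1}$, and then check that $W_n < 2\oplus W_{n-1}$ lexicographically.

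First I would establish that $2\oplus W_{n-1}$ is itself Lyndon: since $W_{n-1}$ is Lyndon by induction and the map $x \mapsto 2\oplus x$ adds the constant $2$ to every letter, it is an order-preserving relabeling of the alphabet, so it carries Lyndon words to Lyndon words and preserves all lexicographic comparisons. Thus $2\oplus W_{n-1}$ is Lyndon. Next I would verify the comparison $W_n < 2\oplus W_{n-1}$. By Lemma~\ref{l1} the first letter of $W_n$ is $0$, while the first letter of $2\oplus W_{n-1}$ is $2$, so the comparison is immediate from the very first symbol; this is the cleanest possible way to see that the left factor is strictly smaller than the right. Having $u = W_n$ Lyndon, $v = 2\oplus W_{n-1}$ Lyndon, and $u < v$, the concatenation lemma gives that $W_{n+1} = uv$ is Lyndon, completing the induction.

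The only genuine obstacle is justifying the auxiliary fact that $uv$ is Lyndon whenever $u,v$ are Lyndon with $u<v$; this is a standard result in combinatorics on words, but if the paper wants to be self-contained I would prove it directly. The argument is that any nontrivial conjugate of $uv$ either splits inside $u$, inside $v$, or exactly at the boundary, and in each case one compares against $uv$ using the Lyndon (hence border-free and lexicographically minimal) structure of $u$ and $v$ together with $u < v$; the hardest sub-case is a rotation whose cut falls strictly inside $v$, where one must use that $v$ being Lyndon means $v$ is strictly smaller than all its own proper suffixes. Alternatively, one can invoke that a word is Lyndon iff it is strictly smaller than each of its proper suffixes, and then show directly that every proper suffix of $W_{n+1}$ exceeds $W_{n+1}$: suffixes lying entirely within $2\oplus W_{n-1}$ start with a letter $\ge 2 > 0$ and so exceed $W_{n+1}$ at once, while suffixes beginning in $W_n$ reduce to the Lyndon property of $W_n$ after accounting for the boundary, again using that the continuation $2\oplus W_{n-1}$ begins with a larger letter than any continuation inside $W_n$. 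I expect the boundary-straddling suffix comparison to require the most care, but Lemma~\ref{l1}'s control over first and last letters makes it manageable.
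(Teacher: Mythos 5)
Your proof is correct, but it takes a genuinely different (and heavier) route than the paper. The paper's argument is a one-liner: by Lemmas~\ref{lemm-l2} and~\ref{l1}, $W_n$ begins with $0$ and contains no other occurrence of $0$, so every proper suffix (equivalently, every nontrivial conjugate) begins with a letter $\geq 1 > 0$ and is therefore lexicographically greater than $W_n$; hence $W_n$ is Lyndon with no induction and no auxiliary machinery. Your induction via $W_{n+1} = W_n \cdot 2\oplus W_{n-1}$, the order-preserving relabeling $x \mapsto 2\oplus x$, and the standard fact that $u,v$ Lyndon with $u < v$ implies $uv$ Lyndon is sound: the base cases check out, $2\oplus W_{n-1}$ is indeed Lyndon because adding a constant is a strictly increasing alphabet map, and $W_n < 2\oplus W_{n-1}$ follows from the first letters $0$ and $2$. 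What your approach buys is generality --- it would still work if $0$ occurred more than once, and it exhibits $W_{n+1}$ explicitly as a product in the Lyndon factorization sense --- but at the cost of importing (or proving) the concatenation lemma, which you correctly flag as the only nontrivial ingredient. Notably, your "alternative" suffix-based argument at the end essentially rediscovers the paper's proof, except that you anticipate a delicate boundary-straddling case that never arises: once you observe that $0$ occurs only at position $1$ of $W_{n+1}$ (immediate from Lemma~\ref{l1} applied through the recurrence), \emph{every} proper suffix starts with a letter $\geq 1$ and the comparison terminates at the first symbol, with no case analysis needed.
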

\begin{proof}
For each $n \geq 0$, $W_n$ begins with $0$ and contains no other occurrences of $0$ by Lemmas~\ref{lemm-l2} and \ref{l1}; hence $W_n$ is a Lyndon word.
\end{proof}

\begin{theorem} \label{T:Lyndon}  Let $\mathcal{L}_k(W_n)$ denote the number of Lyndon factors beginning with the letter $k$ in $W_n$.
\begin{enumerate}
\item[(i)] For all $n \geq 0$, $\mathcal{L}_0(W_n) = f_{n+2}$.
\item[(ii)] For odd $k \geq 1$ and $n \geq k$, we have $\mathcal{L}_k(W_n) = f_{n+3-k} - 1$.
\item[(iii)] For all $n \geq 2$, $\mathcal{L}_2(W_n) = \sum_{j=0}^{n-2}\left\{\sum_{i=j}^{n-2}f_{i+2} - (n-2-j)f_{j+2}-f_{j+1}\right\}+1$.
\item[(iv)] For even $k \geq 4$ and $n \geq k$, we have $\mathcal{L}_k(W_n) = \mathcal{L}_2(W_{n+2-k})$. \\
\end{enumerate}
\end{theorem}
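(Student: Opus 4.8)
The plan is to treat all four parts through a single mechanism: the \emph{block decomposition} of $W_n$ supplied by Lemma~\ref{intro l2}, together with the standard fact that a word is Lyndon iff it is strictly smaller than each of its proper non-empty suffixes. The first thing I would record is the consequence that any Lyndon factor beginning with $k$ must have $k$ as its \emph{minimum} letter: if a letter smaller than $k$ occurred after the first position, the suffix starting there would be smaller than the whole factor. Parts (i) and the base case $k=1$ of (ii) then fall out at once. By Lemma~\ref{l1} the letter $0$ occurs only as the first letter of $W_n$ and the letter $1$ only as the second, so every factor beginning with $0$ (resp.\ $1$) is a prefix of $W_n$ (resp.\ of $W_n[2\dots f_{n+2}]$); since its first letter is then a strict minimum occurring once, each such factor is automatically Lyndon. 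Counting prefixes gives $\mathcal{L}_0(W_n)=|W_n|=f_{n+2}$ and $\mathcal{L}_1(W_n)=f_{n+2}-1=f_{n+3-1}-1$.

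For part (iii) I would use the factorisation $W_n=01\cdot\prod_{i=0}^{n-2}(2\oplus W_i)$ of Lemma~\ref{intro l2}. Since each $W_i$ contains a single $0$, each block $2\oplus W_i$ contains a single $2$, namely its first letter; hence the $n-1$ occurrences of $2$ in $W_n$ are exactly the block boundaries, and every factor beginning with $2$ consists of a run of whole blocks $2\oplus W_j,\dots,2\oplus W_{m-1}$ followed by a length-$\ell_m$ prefix of $2\oplus W_m$. I would then characterise the Lyndon ones. A factor contained in a single block ($m=j$) is $2\oplus(\text{prefix of }W_j)$, hence Lyndon. For a factor spanning $m>j$ blocks, the only suffixes that can threaten the Lyndon condition are those beginning with a $2$, i.e.\ at later block boundaries; using that $W_j$ is a proper prefix of every $W_{j'}$ with $j'>j$ (Lemma~\ref{lemm-l2}), a position-by-position comparison shows that at the first position beyond this shared prefix the factor resumes with a block-initial $2$ whereas the competing suffix carries a larger letter. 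Thus the factor beats every later-block suffix except possibly the last, and it beats the last precisely when $\ell_m\ge f_{j+2}+1$. Establishing this comparison cleanly is, I expect, the main obstacle of the whole theorem.

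With the characterisation in hand, the count for (iii) is bookkeeping. Single-block factors are prefixes of the longest block $2\oplus W_{n-2}$ (the shorter blocks being its prefixes), giving $f_n$ distinct words, each containing exactly one $2$. Multi-block factors contain at least two $2$'s, so they are disjoint from the single-block ones, and because the block boundaries are recoverable from the positions of the $2$'s the triple $(j,m,\ell_m)$ is determined by the word; the admissible values $f_{j+2}+1\le\ell_m\le f_{m+2}$ then contribute
\[
\sum_{j=0}^{n-3}\sum_{m=j+1}^{n-2}\bigl(f_{m+2}-f_{j+2}\bigr)
\]
distinct words. Adding $f_n$ and simplifying with the identity $\sum_{j=0}^{n-2}f_j+1=f_n$ rearranges the total into the stated double sum with its trailing $+1$.

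Finally, parts (ii) for odd $k\ge3$ and (iv) for even $k\ge4$ I would obtain from the self-similarity of this block picture. Iterating the construction, the maximal factors of $W_n$ all of whose letters are $\ge k$ are found by stripping the unique minimum letter of each block and applying $2\oplus$, which lowers the ``level'' by two while replacing $W_n$ by $W_{n-2}$. For odd $k$ the minimum letter $k$ is the shifted image of the unique $1$, so it occurs once in each level-$k$ block; hence every factor beginning with $k$ is a block prefix and automatically Lyndon, and the distinct ones are the prefixes of the longest level-$k$ block, a shifted copy of $W_{n-(k-1)}[2\dots]$ of length $f_{n+3-k}-1$. For even $k\ge4$ the level-$k$ blocks are exactly $(k-2)\oplus(\text{the level-}2\text{ block of }W_{n+2-k})$, so shifting down by $k-2$ gives a bijection between distinct Lyndon factors of $W_n$ beginning with $k$ and those of $W_{n+2-k}$ beginning with $2$, i.e.\ $\mathcal{L}_k(W_n)=\mathcal{L}_2(W_{n+2-k})$. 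Throughout, the delicate points are the level-$k$ analogues of the single Lyndon comparison used in part (iii) and keeping the distinct-factor counts free of double counting.
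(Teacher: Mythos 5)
Your proposal is correct and takes essentially the same route as the paper: the same block factorisation $W_n = 01\cdot X_0X_1\cdots X_{n-2}$ with $X_j = 2\oplus W_j$, the same characterisation of Lyndon factors beginning with $2$ as the unbordered ones (equivalently, those whose final partial block extends past length $|X_j|=f_{j+2}$), and the same self-similarity reductions for parts (ii) and (iv). Your single-block/multi-block bookkeeping, $f_n + \sum_{j}\sum_{m>j}(f_{m+2}-f_{j+2})$, is just a rearrangement of the paper's per-starting-block count and simplifies to the identical formula.
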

\begin{proof} ~
\begin{enumerate}
\item[(i)]  For each $n \geq 0$, $W_n$ begins with $0$ and contains no other occurrences of $0$ (by Lemmas~\ref{lemm-l2} and~\ref{l1}), so each of the $f_{n+2} = |W_n|$ prefixes of $W_n$ is a Lyndon word. Hence $\mathcal{L}_0(W_n) = f_{n+2}$.
\item[(ii)] For each $n \geq 1$, $W_n$ begins with $01$ and contains no other occurrences of $0$ and $1$ (by Lemmas~\ref{lemm-l2} and~\ref{l1}), so each of the $f_{n+2}  - 1$ prefixes of $0^{-1}W_n$ is a  Lyndon word. Hence $\mathcal{L}_1(W_n) = f_{n+2} - 1$. For odd $k \ge 3$, it follows from Lemma~\ref{l1} that $\mathcal{L}_{k}(W_n)=0$ for $n < k$, and for $n \geq k$, $\mathcal{L}_k(W_n)$ is equal to the number of Lyndon factors beginning with $1$ in $W_{n-(k-1)}$, i.e., $\mathcal{L}_k(W_n) = f_{n+2-(k-1)} - 1$.
\item[(iii)] This part is trickier than the others and is proved separately in \S\ref{SS:Lyndon2} below.
\item[(iv)] For even $k \geq 4$, $\mathcal{L}_k(W_n) = 0$ for $n < k$ (by Lemma~\ref{l1}), and for $n \ge k$, it follows from Lemma~\ref{lemm-l2} that $\mathcal{L}_k(W_n)$ is equal to the number of Lyndon factors beginning with $2$ in $W_{n-(k-2)}$, i.e., $\mathcal{L}_k(W_n) = \mathcal{L}_2(W_{n-(k-2)})$.
\end{enumerate}
\end{proof}

\subsection{Proof of part (iii) of Theorem~\ref{T:Lyndon}} \label{SS:Lyndon2}

For $i=2,3,4,5,6,7,8$, we observe that $\mathcal{L}_2(W_i)$ is equal to $1, 3, 7, 18, 42, 93, 195$, respectively. These numbers look mysterious, without any obvious connection to the Fibonacci numbers, as one might expect. But, indeed, the Fibonacci numbers are involved in the formula for $\mathcal{L}_2(W_i)$ that we determine below.

In what follows, we set  \begin{equation} \label{eq1}
X_i=2 \oplus W_i \quad \mbox{for \quad $i \ge 0$}
\end{equation}so that
\begin{eqnarray*}
&X_0&=2\\
&X_1&=23\\
&X_2&=234\\
&X_3&=23445
\end{eqnarray*}

\begin{lemma} \label{lyndon-l2} For $k\ge 2$, we have
$$W_k=01X_0X_1\cdots X_{k-2}.$$
\end{lemma}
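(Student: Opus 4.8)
The statement to prove is the factorisation $W_k = 01 X_0 X_1 \cdots X_{k-2}$ for $k \ge 2$, where $X_i = 2 \oplus W_i$. The plan is to recognise that this is essentially a restatement of the first factorisation already established in Lemma~\ref{intro l2}. Indeed, Lemma~\ref{intro l2} asserts that
\begin{equation*}
W_k = 01 \cdot \prod_{i=0}^{k-2} 2 \oplus W_i,
\end{equation*}
and since $X_i = 2 \oplus W_i$ by definition \eqref{eq1}, the product $\prod_{i=0}^{k-2}(2 \oplus W_i)$ is exactly $X_0 X_1 \cdots X_{k-2}$. So the cleanest route is simply to invoke Lemma~\ref{intro l2} and substitute the notation $X_i$ for $2 \oplus W_i$; the result follows immediately.

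If instead one wants a self-contained argument, I would prove it directly by induction on $k$. The base case $k=2$ gives $W_2 = 012 = 01 \cdot X_0$, since $X_0 = 2 \oplus W_0 = 2$, which matches. For the inductive step, assume $W_{k-1} = 01 X_0 X_1 \cdots X_{k-3}$. Applying Lemma~\ref{lemm-l2} in the form $W_k = W_{k-1} \cdot 2 \oplus W_{k-2}$ and recognising that $2 \oplus W_{k-2} = X_{k-2}$, I obtain
\begin{equation*}
W_k = W_{k-1} \cdot X_{k-2} = 01 X_0 X_1 \cdots X_{k-3} \cdot X_{k-2} = 01 X_0 X_1 \cdots X_{k-2},
\end{equation*}
completing the induction.

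There is no real obstacle here: the lemma is a notational repackaging of a factorisation already derived, and the only thing to check is that the definition $X_i = 2 \oplus W_i$ lines up the index ranges correctly (the product runs $i = 0, \ldots, k-2$ in both statements). I would present the short derivation from Lemma~\ref{intro l2} as the main proof, since it makes the logical dependence transparent and avoids repeating an induction that has effectively already been carried out.
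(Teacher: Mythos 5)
Your proposal is correct and matches the paper's proof, which likewise observes that the statement is immediate from the earlier factorisation $W_k = 01 \cdot \prod_{i=0}^{k-2} 2 \oplus W_i$ once the notation $X_i = 2 \oplus W_i$ is substituted. The supplementary induction you sketch is also sound but unnecessary, exactly as you note.
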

\begin{proof}
This is immediate from Lemma \ref{intro l2}.
\end{proof}
\noindent \textbf{Example:} $W_5=01\cdot 2\cdot 23\cdot 234\cdot 23445=01\cdot X_0\cdot X_1\cdot X_2\cdot X_3.$

\begin{lemma} \label{l3} A factor of the ZWW word which begins with 2 is Lyndon unless it is bordered.
\begin{proof}
For the sake of contradiction suppose that $u$ is an unbordered factor of the $ZWW$ word with $u[1]=2$ which is not Lyndon. Since it is not Lyndon, it has a suffix $s$ which is lexicographically  less than $u$.  Let $p$ be the prefix of $u$ of length $|s|$.  Since $u$ is unbordered  $p \not = s$ and we must have $s$ less than $p$.  Since $p$ begins with 2 so must $s$ and each therefore begins at the start of $X_i$ for some $i$.  Say that $p$ is a prefix of $X_iX_{i+1}\cdots$ and $s$ is a prefix of $X_{i+j}X_{i+j+1}\cdots$. By Lemma \ref{lyndon-l2}, $X_i$ is a prefix of $X_{i+j}$.  Let $X'$ be the length $|X_{i+j}|-|X_i|$ suffix of $X_{i+j}$.  Then we have a prefix of $X'X_{i+j+1}\cdots$ which is lexicographically less than a prefix of $X_{i+1}\cdots$.  But that is impossible since $X_{i+1}\cdots$ begins with 2 and $X'X_{i+j+1}\cdots$ begins with something larger than 2. This contradiction completes the proof.
\end{proof}
\end{lemma}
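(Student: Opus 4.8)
The plan is to prove the statement by contradiction, using the standard fact that a word is Lyndon exactly when it is strictly smaller than every one of its proper nonempty suffixes. So I assume $u$ is an unbordered factor of $W$ with $u[1]=2$ that is \emph{not} Lyndon, and I produce a proper nonempty suffix $s$ with $s \le u$; since $|s|<|u|$ this forces $s<u$. Comparing $s$ with the length-$|s|$ prefix $p$ of $u$, unborderedness rules out $s$ being a prefix of $u$ (that would make $s$ simultaneously a prefix and a suffix), so $p \ne s$ and the first mismatch gives the strict inequality $s < p$. My goal is then to show that this inequality is incompatible with the block structure of $W$.

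The backbone is Lemma~\ref{lyndon-l2}, which gives $W = 01\,X_0 X_1 X_2\cdots$ with $X_i = 2 \oplus W_i$. First I would record the key structural fact: since $0$ occurs exactly once in each $W_i$ (at its first position, by Lemmas~\ref{lemm-l2} and~\ref{l1}), the letter $2$ occurs exactly once in each $X_i$, namely at its left end, so the occurrences of $2$ in $W$ are precisely the block boundaries. Every letter after the initial $01$ is at least $2$, so $p$ and $s$ have all letters $\ge 2$; as $p$ begins with $2$ and $s < p$, the word $s$ must also begin with $2$. Hence both $p$ and $s$ start at block boundaries: $p$ is a prefix of $X_i X_{i+1}\cdots$ and $s$ is a prefix of $X_{i+j}X_{i+j+1}\cdots$ with $j \ge 1$. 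Iterating $W_{m+1}=W_m\cdot 2\oplus W_{m-1}$ shows $W_i$ is a prefix of $W_{i+j}$, so $X_i$ is a prefix of $X_{i+j}$; also $|p| > |X_i|$, since otherwise $p$ and $s$ would be equal-length prefixes of $X_i$ and hence equal.

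The decisive and most delicate step is to cancel the common prefix $X_i$ and compare the leading letters of the two tails. Writing $X_{i+j} = X_i X'$, the inequality $s < p$ reduces to saying that a nonempty prefix of $X' X_{i+j+1}\cdots$ is lexicographically smaller than a prefix of $X_{i+1}\cdots$. But $X_{i+1}$ begins with $2$, whereas $X'$ begins with $4$: the letter of $W_{i+j}$ immediately after the prefix $W_i$ is the first letter of $2 \oplus W_{i-1}$ inside $W_{i+1}$, namely $2$, which becomes $4$ after the shift defining $X_{i+j}$. A tail starting with $4$ cannot be smaller than one starting with $2$, giving the contradiction. The hard part will be exactly this last identification — verifying that $s$ inherits a leading $2$ and that the symbol following $W_i$ in $W_{i+j}$ is $2$ (hence $4$ after shifting); both reduce to Lemmas~\ref{lemm-l2} and~\ref{l1}.
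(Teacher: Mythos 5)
Your proposal is correct and follows essentially the same route as the paper's own proof: characterise occurrences of $2$ as the block boundaries of the factorisation $W = 01\,X_0X_1\cdots$, deduce that both $p$ and $s$ start at block starts, cancel the common prefix $X_i$ of $X_{i+j}$, and derive a contradiction because the residual tail of $X_{i+j}$ begins with a letter greater than $2$ while $X_{i+1}$ begins with $2$. You even make explicit two small points the paper leaves implicit (why $s$ must begin with $2$, and why $|p|>|X_i|$); the only nitpick is that the first letter of $X'$ is $4$ only for $i\ge 1$ (it is $3$ when $i=0$), but all the argument needs is that it exceeds $2$.
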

\begin{theorem} The number of Lyndon words beginning with 2 in $W_i$ is $$\sum_{j=0}^{i-2}\left\{\sum_{k=j}^{i-2}f_{k+2} - (i-2-j)f_{j+2}-f_{j+1}\right\}+1.$$
\end{theorem}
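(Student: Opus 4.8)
The plan is to count Lyndon factors beginning with $2$ by exploiting Lemma~\ref{l3}, which reduces the problem to counting \emph{unbordered} factors that begin with $2$, together with the handful of bordered exceptions (the straddling palindromes $22$, $232$, $323$ and their relatives). First I would use Lemma~\ref{lyndon-l2} to write $W_i = 01 X_0 X_1 \cdots X_{i-2}$, so that every occurrence of the letter $2$ in $W_i$ is the first letter of some block $X_j$ (since by Lemma~\ref{l1} the letter $2$ first appears as the leading letter of each $X_j = 2\oplus W_j$). A candidate Lyndon factor beginning with $2$ therefore starts at the boundary preceding some $X_j$ and, by Lemma~\ref{l3}, is Lyndon precisely when it is unbordered (the only bordered-but-relevant cases being the three short palindromes, which contribute the additive $+1$ correction at the end).

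Next I would set up the count by summation over the starting block index $j$ (ranging $0 \le j \le i-2$) and the ending position. A factor starting at $X_j$ and running to some position must avoid having a border; since $X_j$ is a prefix of every later $X_{j'}$ with $j' > j$ (by the nesting from Lemma~\ref{lyndon-l2}), the obstruction to being unbordered is precisely the reappearance of the prefix $X_j = 2\oplus W_j$ later in the factor. The key observation is that a border would force a repeated occurrence of the leading letter $2$ aligned against a later $2$, and the structure $W_j$ (in which the maximal letter $j$ occurs exactly once, by Lemma~\ref{l1}) controls exactly how far such a factor may extend before it becomes bordered. I would count, for each fixed start $X_j$, the number of admissible endpoints: this gives the inner sum $\sum_{k=j}^{i-2} f_{k+2}$ (the total length of the tail $X_j X_{j+1}\cdots X_{i-2}$, using $|X_k| = |W_k| = f_{k+2}$ from \eqref{introe0}), from which one subtracts the forbidden extensions that create a border. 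The subtracted terms $(i-2-j)f_{j+2} + f_{j+1}$ should be read as removing those endpoints for which the factor either repeats the whole block $X_j$ (there are $i-2-j$ later blocks, each forcing removal of $|X_j| = f_{j+2}$ length) or else repeats just a proper prefix of $X_j$ of length accounting for the $f_{j+1}$ term.

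The main obstacle I expect is the careful bookkeeping in the inner sum: one must show precisely which endpoints yield unbordered factors and translate the "forbidden border" conditions into the exact counts $(i-2-j)f_{j+2}$ and $f_{j+1}$. The cleanest route is to fix the start index $j$ and argue directly about where the prefix $2\oplus W_j$ can recur: because $W_j$ is a Lyndon word containing its largest letter $j$ uniquely, a border of a factor starting at $X_j$ can only align $X_j$ against the start of a later block $X_{j'}$, and the earliest such alignment caps the unbordered length. Summing the admissible lengths over all valid endpoints and over all $j$, and then adding $1$ to account for the single bordered factor that is nonetheless Lyndon (or, equivalently, to correct the boundary case at $j=i-2$), should yield exactly
\[
\sum_{j=0}^{i-2}\left\{\sum_{k=j}^{i-2}f_{k+2} - (i-2-j)f_{j+2}-f_{j+1}\right\}+1.
\]
I would verify the formula against the tabulated values $1,3,7,18,42,93,195$ for $i=2,\dots,8$ as a consistency check, and I anticipate that the delicate part is justifying that no other bordered factors beginning with $2$ happen to be Lyndon, which follows from Lemma~\ref{l3} once the short palindromic exceptions are isolated.
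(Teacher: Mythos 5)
Your overall skeleton matches the paper's: decompose $W_i = 01X_0X_1\cdots X_{i-2}$ via Lemma~\ref{lyndon-l2}, note that every factor beginning with $2$ starts at the left end of some block $X_j$, invoke Lemma~\ref{l3} to reduce ``Lyndon'' to ``unbordered'', and for each $j$ count the endpoints that produce a border. Your account of the term $(i-2-j)f_{j+2}$ is essentially the paper's: since consecutive $2$'s in $W_i$ are separated by pairwise distinct gaps, a border can contain only one $2$, hence is a prefix of $X_j$ aligned with the start of a later block, so the bordered factors are exactly those ending inside the length-$|X_j|$ prefix of one of the $i-2-j$ later blocks $X_k$, $j+1 \le k \le i-2$.

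However, you misinterpret the other two correction terms, and as described your bookkeeping would not close. First, the subtracted $f_{j+1} = |X_{j-1}|$ is \emph{not} a border condition: a factor starting at $X_j$ of length at most $|X_{j-1}|$ is a prefix of $X_{j-1}$ (since $X_{j-1}$ is a prefix of $X_j$), and is typically unbordered and genuinely Lyndon --- it is excluded only because the \emph{same word} already occurs starting at $X_{j-1}$, and the theorem counts distinct Lyndon factors. Treating this term as ``removing endpoints that repeat a proper prefix of $X_j$'' double-books it against the $(i-2-j)f_{j+2}$ term and would give the wrong total. Second, the final $+1$ has nothing to do with the palindromes $22$, $232$, $323$, nor with ``a bordered factor that is nonetheless Lyndon'' --- no such factor exists, since every Lyndon word is unbordered (Lemma~\ref{l3} gives the converse implication for factors beginning with $2$, not an exception to this). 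The $+1$ simply compensates for uniformly subtracting $f_{j+1} = f_1 = 1$ in the $j=0$ summand, where there is no previous block and hence nothing to deduplicate. You should also state explicitly that the count is of distinct factors, since that is what necessitates the deduplication term in the first place.
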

\begin{proof}  Each Lyndon word beginning with 2 in $W_i$ must begin at the the beginning of $X_j$ for some $j$ with $0\le j \le i-2.$  We first calculate the number of such words for a particular value of $j$ and then sum over~$j$.

Let $u$ be a word beginning at the start of $X_j$ and ending in $W_i$. By (\ref{eq1}) and Lemma \ref{lyndon-l2}, we see that each pair of consecutive 2s in $W_i$ is separated by a different distance.  It follows that a border of $u$ cannot contain more than one 2.  It therefore contains exactly one 2 and has length at most $|X_j|$.  By Lemma \ref{lemm-l2}, $X_j$ is a prefix of $X_k$ for all $k$ greater than $j$ so $u$ has a border if and only if $u$ ends inside a length $|X_j|$ prefix of some $X_k$ for $j+1 \le k \le i-2$.  There are thus $(i-2-j)|X_j|$ words $u$ which are not Lyndon.  For $j>0$ the total number of words in $W_i$ that begin at the start of $X_j$ and finish after the length $|X_{j-1}|$ prefix of $X_j$ is
\begin{eqnarray*}
\sum_{k=j}^{i-2}f_{k+2} -|X_{j-1}|. 
\end{eqnarray*} The length $|X_{j-1}|$ prefix is excluded because any Lyndon word in it will duplicate one in $X_{j-1}$. Subtracting the number of non-Lyndon words from this gives the total number of Lyndon words in $W_i$ that begin at the start of $X_j$:
\begin{eqnarray*}
\sum_{k=j}^{i-2}f_{k+2} - (i-2-j)|X_j|-|X_{j-1}|. 
\end{eqnarray*}

To get the total number of Lyndon words in $W_i$ we sum this over $j$.  In doing so we replace $|X_{j-1}|$ with $f_{j+1}$ when $j>0$.  When $j=0$ there is nothing to subtract, however to make the formula nicer we subtract $f_{j+1}$ in this case as well and compensate by adding 1 to the final formula. The total number of Lyndon words beginning with 2 in $W_i$ is therefore

\begin{eqnarray*}
\sum_{j=0}^{i-2}\left\{\sum_{k=j}^{i-2}f_{k+2} - (i-2-j)f_{j+2}-f_{j+1}\right\}+1.
\end{eqnarray*}

\end{proof}

\section{Lyndon array of $W_k$}
\label{sect-lyndon-array}

The {\it Lyndon array} $\lambda = \lambda_X[1..n]$
(equivalently, $\L = \L_X[1..n]$)
of a given non-empty word $X = X[1..n]$
gives at each position $i$ the length
(equivalently, the end position)
of the longest Lyndon word starting at $X[i]$. For example:

\begin{equation}
\label{ex1}
\begin{array}{rccccccccc}
\scriptstyle 1 & \scriptstyle 2 & \scriptstyle 3 & \scriptstyle 4 & \scriptstyle 5 & \scriptstyle 6 & \scriptstyle 7 & \scriptstyle 8 & \scriptstyle 9 & \scriptstyle 10 \\
X = a & b & a & a & b & a & b & a & a & b \\
\lambda  = 2 & 1 & 5 & 2 & 1 & 2 & 1 & 3 & 2 & 1 \\
\L = 2 & 2 & 7 & 5 & 5 & 7 & 7 & 10 & 10 & 10
\end{array}
\end{equation}
Clearly $\lambda[i] = \L[i] - i + 1$.

The Lyndon array has only recently been clearly defined \cite{FIRS16},
but turns out to have an intimate connection with the suffix array \cite{PST07}
that so far is not well understood:
the suffix array can be computed from $\lambda$ in linear time
using the Next Smaller Value algorithm \cite{HR03,FIRS16};
on the other hand, a sorted version of $\lambda$ is computed
by Phase I of the first non-recursive linear-time
suffix array computation algorithm due to Baier \cite{B16}.
To date nine algorithms, including Baier's,
have been discovered to compute the Lyndon array of $X$,
of which the fastest in practice is apparently a brute force approach
that requires $O(n^2)$ time in the worst case~\cite{FIRS16}.

It is thus perhaps of interest to investigate the Lyndon arrays $\lambda_k/\L_k$
of $W_k$ for given finite $k$.
Trivially, since for all positive $k$,
$W_k[1..2] = 01$, we have $\L_k[1] = \L_k[2] = w_k$ where $w_k = |W_k| = f_{k+2}$.
If for $k \ge 2$, we define $W_{k-1}^{k-2,2} = 2 \oplus W_{k-2}$
to express the suffix of $W_k$ in terms of a prefix of length $w_{k-2}$
of $W_{k-1}$, Lemma~\ref{e1} yields the following.

\begin{remark}
\label{rem-1}
For $k \ge 2$, $W_k = W_{k-1} \cdot W_{k-1}^{k-2,2}$.
\end{remark}
Thus, using $w_{k-2}$, each $W_k$ can be computed
by a direct calculation from $W_{k-1}$.
The first few values, for $k = 2, 3, 4$, are
\begin{eqnarray*}
W_2 &=& W_1\cdot W_1^{0,2} = 01\cdot 2 \\
W_3 &=& W_2\cdot W_2^{1,2} = 012\cdot 23 \\
W_4 &=& W_3\cdot W_3^{2,2} = 01223\cdot 234
\end{eqnarray*}
The following results are then immediate.
\begin{remark}
\label{rem-2}
{\ }
\begin{itemize}
\item[(a)]
For $k \ge 2$,
the digit 2 occurs \emph{only} as the first digit of $W_{k-1}^{k-2,2}$.
\item[(b)]
Since $W_{k-1}^{k-2,2}$ is a prefix of $W_k^{k-1,2}$, we have $W_{k-1}^{k-2,2} < W_k^{k-1,2}$.
\end{itemize}
\end{remark}
Thus for $i \ge 2$, $W_k[i] = 2 \Rightarrow \L[i] = w_k$.
Otherwise, $W_k[i] > 2$, and so for all such $i$,
$\lambda_k[i]$ must take the same values assumed by the corresponding
positions in $W_{k-1}$.
More precisely:
\begin{lemma}
\label{lemm-lyndon}
For $k \ge 1$,
\begin{itemize}
\item[(a)]
$1 \le i \le w_{k-1} \Rightarrow \lambda_k[i] = \lambda_{k-1}[i]$;
\item[(b)]
$w_{k-1} + 1 \le i \le w_k \Rightarrow \lambda_k[i] = \lambda_{k-1}[i - w_{k-1}]$.
\end{itemize}
\end{lemma}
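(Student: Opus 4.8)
The two ingredients I would use are the decomposition $W_k = W_{k-1}\cdot(2\oplus W_{k-2})$ of Remark~\ref{rem-1}, and the elementary fact that a Lyndon word contains no letter strictly smaller than its first letter. (A word is Lyndon precisely when it is strictly smaller than each of its proper suffixes; if some letter were smaller than the first, the suffix beginning at that letter would be smaller than the whole word, a contradiction.) In particular, any Lyndon factor of $W_k$ whose first letter exceeds $2$ contains no occurrence of the letter $2$.

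\emph{Part (a).} Fix $i$ with $1\le i\le w_{k-1}$. Since $W_{k-1}$ is a prefix of $W_k$, the longest Lyndon word starting at position $i$ in $W_{k-1}$ is still a Lyndon factor of $W_k$, so $\lambda_k[i]\ge\lambda_{k-1}[i]$; the content is the reverse inequality. By Lemma~\ref{l1} the block $2\oplus W_{k-2}$ begins with $2$, so position $w_{k-1}+1$ of $W_k$ carries the letter $2$. If $W_k[i]>2$, then the longest Lyndon word $L$ starting at $i$ contains no $2$ by the observation above, hence cannot reach position $w_{k-1}+1$, and so lies entirely within the prefix $W_{k-1}$. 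Then $L$ is a Lyndon factor of $W_{k-1}$ beginning at $i$, whence $\lambda_k[i]\le\lambda_{k-1}[i]$ and equality holds. This barrier step is the crux of the lemma, and it is exactly where the assumption $W_k[i]>2$ is needed: a position carrying $0$, $1$, or $2$ instead begins a Lyndon word that runs to the end of $W_k$ --- for $0$ and $1$ because they occur uniquely in $W_k$, and for $2$ by the remark preceding the lemma that $W_k[i]=2\Rightarrow\L_k[i]=w_k$ --- and at those positions the entry genuinely grows with $k$.

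\emph{Part (b).} Fix $i$ with $w_{k-1}+1\le i\le w_k$ and set $j=i-w_{k-1}\in[1,w_{k-2}]$. Here the block $2\oplus W_{k-2}$ is a \emph{suffix} of $W_k$, so any Lyndon word starting at $i$ is confined to positions $i,\ldots,w_k$ and lies wholly inside it; the surrounding word imposes no extra constraint. Adding the constant $2$ to every letter is order preserving, so it preserves the lexicographic order and hence the Lyndon property of every factor; thus $2\oplus W_{k-2}$ and $W_{k-2}$ have identical Lyndon arrays. Therefore $\lambda_k[i]=\lambda_{k-2}[j]$, the value read off the length-$w_{k-2}$ prefix of $W_{k-1}$, which is precisely $W_{k-2}$; by part (a) at index $k-1$ this agrees with $\lambda_{k-1}[j]$ at every $j$ with $W_{k-1}[j]>2$.

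The whole statement then packages as a single induction on $k$: part (b) is self-contained through the order-isomorphism of the terminal block, while part (a) pushes each $k$-value at a letter $>2$ down to the matching $(k-1)$-value, the finitely many small-letter positions being governed by Remark~\ref{rem-2} and the boundary remark rather than by the recursion. I expect all the real difficulty to sit in the barrier argument of part (a); the rest is bookkeeping with $w_k=f_{k+2}$ and the two decompositions.
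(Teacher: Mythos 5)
Your argument is correct and is considerably more than the paper itself offers: the lemma is stated there with no proof beyond the one-sentence observation, drawn from Remarks~\ref{rem-1} and~\ref{rem-2}, that positions with $W_k[i]=2$ satisfy $\L_k[i]=w_k$ while positions with $W_k[i]>2$ ``must take the same values assumed by the corresponding positions in $W_{k-1}$.'' Your barrier argument for part (a) (a Lyndon factor whose first letter exceeds $2$ contains no $2$, hence cannot cross position $w_{k-1}+1$) and the order-isomorphism between $2\oplus W_{k-2}$ and $W_{k-2}$ for part (b) supply exactly the missing mechanism. You are also right to insist on the caveats, because the lemma as literally printed is false at the excluded positions: for part (a), $\lambda_3[1]=5$ while $\lambda_2[1]=3$; and for part (b) the correct hypothesis is on the letter at the \emph{corresponding} position $i-w_{k-1}$ rather than at $i$ itself --- for $k=4$, $i=7$ we have $W_4[7]=3>2$, yet $\lambda_4[7]=2$ (the factor $34$) whereas $\lambda_3[2]=4$ (the factor $1223$). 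Your chain $\lambda_k[i]=\lambda_{k-2}[i-w_{k-1}]$, valid at \emph{every} position of the suffix block, followed by an application of part (a) at index $k-1$ wherever $W_{k-1}[i-w_{k-1}]>2$, is the right way to phrase the correct statement; it is also exactly the case split that Algorithm LA in Figure~\ref{fig-la} implements (first position of a block; corresponding position with $\L=w_k$, i.e.\ letter $0$, $1$ or $2$; all remaining positions). So there is no gap on your side --- the residual imprecision lies in the lemma's statement, which must be read as restricted to the positions your proof actually covers.
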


Based on this result,
the algorithm shown in Figure~\ref{fig-la} computes $\L_{W_k}$
in time $O(w_k)$; implementing the statements in square brackets
also yields the computation of $W_k$. Apart from storage for $W_k$ and $\L$, the algorithm requires only constant space.

\begin{figure}[htb!]
{\leftskip=1.0in\obeylines\sfcode`;=3000
\bproc LA $(k)$
\com{Suppose that $w_k = f_{k+2}$ has been precomputed.}
$\big[W[1..2] \la 01\big];\ \L[1..2] \la w_kw_k;\ w_{-1} \la 2;\ w_{-2} \la 1$
\bfor $j \la 2$ \bto $k$ \bdo
\qq $w \la w_{-1}\+ w_{-2}$
\qq \bfor $i \la w_{-1}\+ 1$ \bto $w$ \bdo
\qq\qq $\big[W[i] \la W[i\m w_{-1}]\+ 2\big]$
\qq\qq \bif $i = w_{-1}\+ 1$ \bthen
\qq\qq\qq $\L[i] \la w_k$
\qq\qq \belsif $\L[i\m w_{-1}] = w_k$ \bthen
\qq\qq\qq $\L[i] \la w$
\qq\qq \belse
\qq\qq\qq $\L[i] \la \L[i\m w_{-1}]\+ w_{-1}$
\qq $w_{-2} \la w_{-1};\ w_{-1} \la w$
}
\caption{Compute $W_k$ and its Lyndon array $\L_{W_k}$, $k \ge 0$.}
\label{fig-la}
\end{figure}

We note that Algorithm LA \textit{uses only elementary methods} to compute $W_k$ and its Lyndon array in linear time using only constant additional space; all general-purpose linear-time Lyndon array algorithms (e.g., \cite{B16, HR03}) require advanced data structures (suffix array, etc.) and $O(w_k)$ additional space.


\section{Concluding Remarks}

In this paper, we have counted the number of squares, palindromes, and Lyndon factors occurring in the finite building blocks, $W_n$, of the ZWW word. The following table shows the number of squares (distinct and total) in the words $W_n$ and the original finite Fibonacci words $F_n$.

\medskip
\begin{table}[htb!]
\begin{tabular}{lcc}
\hline
~ &Number of Distinct Squares &\qquad \qquad Total Number of Squares \\
\hline
$W_n$ & $\lfloor (n-1)/2 \rfloor\lceil(n-1)/2 \rceil$  &\qquad \qquad $f_n-1$ \\
$F_n$ &$2(f_n-1)$ &\qquad \qquad $\frac{4}{5}(n+1)f_{n+2} - \frac{2}{5}(n+7)f_{n+1} - 4f_n + n + 2$ \\
\hline 
\end{tabular} \medskip
\caption{Distinct and total numbers of squares in the words $W_n$ and $F_n$.}
\end{table}

The formulas for $F_n$ were determined by Fraenkel and Simpson~\cite{FS99}. We see that the numbers of squares (distinct and total) grow slower for the $W_n$ words compared to the Fibonacci words. Moreover, the ZWW word does not contain any higher powers whereas the Fibonacci word contains cubes (e.g., $(010)^3$ is a factor of $F_6$). The $W_n$ words also contain much fewer palindromic factors --- we showed that $W_n$ contains $\lfloor 5n/2\rfloor - 2$ distinct palindromes, whereas the finite Fibonacci words are well known to be `rich' in palindromes in the sense that a new palindrome is introduced at each position, i.e., the number of distinct non-empty palindromes in $F_n$ is $|F_n| = f_{n+2}$ for all $n \geq 0$ (a word of length $n$ contains at most $n$ distinct non-empty palindromes~\cite{DJP}). 

Intuitively, one would expect lower numbers of patterns such as squares and palindromes to occur in the ZWW word because of the introduction of a new letter $n$ at the end of each $W_n$. The well-studied Fibonacci word $F$, on the other hand, has a rich structure, being a special example of a \textit{Sturmian word}; such words are 'almost periodic' in the sense that they are aperiodic words of minimal complexity (for more details, see \cite[Ch.\ 2]{mL02alge}). 

Lastly, we note that, unlike what we have observed for squares and palindromes, Lyndon factors are more plentiful in $W_n$ compared to $F_n$. This is because each $W_n$ is itself a Lyndon word, which has the Lyndon words $W_i$, $0 \leq i \leq n$, as factors. By contrast, none of the $F_n$ are Lyndon words, but each such word has a unique circular shift (conjugate) which is Lyndon --- called the \textit{Fibonacci Lyndon word} of length $|F_n| = f_{n+2}$ --- and contains the \textit{minimum} number of Lyndon factors (specifically, $n+2$ of them) over all Lyndon words of length at least $f_{n+2}$ (see \cite{kS14lynd}).

 \end{document}